\numberwithin{equation}{section} 
\newcommand{\Nb}{{\mathbb{N}}}
\newcommand{\Rb}{{\mathbb{R}}}
\newcommand{\wto}{\rightharpoonup}
\newcommand{\wsto}{\stackrel{*}{\rightharpoonup}}
\def\rightharpoonupfill@{\arrowfill@\relbar\relbar\rightharpoonup}
\newcommand{\xrightharpoonup}[2][]{\ext@arrow
0359\rightharpoonupfill@{#1}{#2}} \makeatother
\def\weakstar{\buildrel\star\over\rightharpoonup}
\def\e{{\varepsilon}}
\def\O{{\Omega}}
\def\weak{\rightharpoonup}
\newtheorem{theorem}{Theorem}[section]
\newtheorem{lemma}[theorem]{Lemma}
\newtheorem{proposition}[theorem]{Proposition}
\newtheorem{remark}[theorem]{Remark}
\newtheorem{definition}[theorem]{Definition}
\newcommand{{\rr}}{{\mathbb R}}
\newenvironment{@abssec}[1]{%
     \if@twocolumn
       \section*{#1}%
     \else
       \vspace{.05in}\footnotesize
       \parindent .2in
         {\upshape\bfseries #1. }\ignorespaces
     \fi}
     {\if@twocolumn\else\par\vspace{.1in}\fi}
\begin{document}

\title{\sc Relaxation for optimal design problems with non-standard growth }

\author{{\sc Ana Cristina Barroso}\\
Departamento de Matem\'atica and CMAF-CIO \\ 
Faculdade de Ci\^encias da Universidade de Lisboa\\ 
Campo Grande, Edif\' \i cio C6, Piso 1\\
1749-016 Lisboa, Portugal\\
acbarroso@ciencias.ulisboa.pt\\
and \\
{\sc Elvira Zappale} \\
	Dipartimento di Ingegneria Industriale \\
	Universit\`a degli Studi di Salerno\\
	Via Giovanni Paolo II, 132\\
	84084 Fisciano (SA), Italy\\
	and CIMA (Universidade de \'Evora)\\
ezappale@unisa.it}
\maketitle

This paper is dedicated to the memory of our good friend and 
colleague Gra\c ca Carita, with whom the second author had the privilege of collaborating on many previous occasions.  
Gra\c ca joined us at the early stages of 
this work and tragically passed away on September 26, 2016.

\begin{abstract}
In this paper we investigate the possibility of obtaining a measure 
representation for functionals arising in the context of optimal 
design problems under non-standard growth conditions and perimeter penalization. Applications to modelling of strings are also provided.

\end{abstract}

\smallskip

{\bf MSC (2010)}: 49J45, 74K10

{\bf Keywords}: measure representation, non-standard growth 
conditions, optimal design, sets of finite perimeter, convexity

\section{Introduction}

Given two conductive materials present in a container $\Omega$, 
and prescribing
the volume fraction of each one, the optimal design problem, 
originally studied by Murat and Tartar and by Kohn and Strang in
\cite{MT, KS1, KS2, KS3}, consists of identifying the minimal energy 
configuration of the mixture.
As emphasized in \cite{MT} (see, for instance, the one dimensional 
model case in Proposition 4 and Remark 7 therein), this problem 
might not have a solution. In the works of Ambrosio and Buttazzo 
\cite{AB} and Kohn and Lin \cite{KL} this difficulty is 
overcome by the introduction, in the energy functional to be 
minimized, of a term which penalizes the 
perimeter of the sets where the mixture equals one of the conductive 
materials, thus also 
eliminating the case where the two materials are finely mixed.

Carita and Zappale in \cite{CZ} considered a similar functional to 
the one in \cite{AB} and studied the minimum problem
\begin{eqnarray}
& & \inf \left \{ \int_{\Omega}\chi_E(x) W_1(\nabla u(x)) + 
(1 - \chi_E(x)) 
W_2(\nabla u(x)) \, dx 
+ |D\chi_E|(\Omega) : \right. \nonumber \\ 
& & \hspace{4cm} \left. u \in W^{1,1}(\Omega; \Rb^d), \chi_E \in 
BV(\Omega;\{0,1\}), u = u_0 \; {\rm on} \; \partial \Omega \right\},
\nonumber
\end{eqnarray}
where the densities $W_i$, $i = 1,2$, are continuous functions such 
that there exist positive constants $\alpha$ and $\beta$
for which
$$\alpha |\xi| \leq W_i(\xi) \leq \beta (1 + |\xi|), \; \; \forall 
\xi \in \Rb^{d \times N}.$$
Since no convexity assumptions were placed on $W_i$, they considered 
the relaxed localized energy arising
from  the above problem
\begin{align}
\mathcal{F}_{\mathcal{OD}}\left(  \chi,u;A\right)   &  
:=\inf\left\{  
\underset{n\rightarrow +\infty}{\lim\inf}\left[
 \int_{A}\chi_n(x) W_1(\nabla u_n(x)) + (1 - \chi_n(x))W_2(\nabla u_n(x)) \, dx + |D\chi_n|(A) \right]: \right.\nonumber \\
& \hspace{3cm}  u_{n} \in W^{1,1}\left(A;\mathbb{R}^{d}\right),
\chi_{n} \in BV\left(  A;\left\{  0,1\right\}  \right),  
\nonumber\\
&  \hspace{3cm} \left.  u_{n}\to u\text{ in }
L^{1}\left(  A;\mathbb{R}^{d}\right),
\chi_{n}\overset{\ast}{\rightharpoonup}\chi\text{ in }
BV\left(A;\left\{  0,1\right\}  \right)  \right\},\nonumber
\end{align}
and they showed that $\mathcal{F}_{\mathcal{OD}}\left(\chi,u;\cdot\right)$ is the trace on the open subsets of $\Omega$ of a 
finite Radon measure. The characterisation of this measure was provided by obtaining an integral representation for
$\mathcal{F}_{\mathcal{OD}}\left(\chi,u;A\right)$, where $A$ is an open subset of $\Omega$.

The case of non-convex $W_i$ with superlinear growth was addressed in the context of thin films in \cite{CZ1}. 

Our aim in this paper is to study the above optimal design problem within the context of non-standard growth
conditions. We take $\Omega$ to be a bounded, open subset of $\mathbb R^N$ and we let 
\begin{equation}\label{pqn}
1<p\leq q< \frac{N p}{N-1}.
\end{equation} 
If $N=1$ we let $1<p\leq q< +\infty.$
Let $F:BV\left(\Omega;\left\{  0,1\right\}\right)  \times 
W^{1,p}\left(\Omega;\mathbb{R}^{d}\right)
\rightarrow\left[  0,+\infty\right)$ 
be given by
\begin{equation}\label{F}
F\left(\chi,u\right)  :=\int_{\Omega}\chi\left(x\right) 
W_{1}\left(\nabla u(x)\right)  + \left(1-\chi\left(x\right)\right)  
W_{2}\left(\nabla u(x)\right) \, dx
+\left\vert D\chi\right\vert \left(\Omega\right)
\end{equation}
where $W_{i}:\mathbb{R}^{d\times N}\rightarrow\mathbb{R}$, $i=1,2$, are
continuous functions satisfying the following growth condition
\begin{equation}\label{growth}
\exists \beta > 0 :
0 \leq W_{i}\left(  \xi\right)  \leq
\beta\left(  1+\left\vert \xi\right\vert ^{q}\right),
\; \; \; \forall \xi \in \Rb^{d \times N}.  
\end{equation}

\noindent To simplify the notation, in the sequel, we let 
$f: \{0,1\} \times \mathbb R^{d \times N} \to \mathbb R$
be defined as
\begin{equation}\label{density}
f\left(  b,\xi\right)  :=b  W_1(\xi)+ (1-b)W_2(\xi). 
\end{equation}

\noindent In order to localize \eqref{F} we set, for every open set $A \subset \Omega$ and every 
$(\chi ,u) \in BV(A;\{0,1\})\times W^{1,p}(A;\mathbb R^d)$,
\begin{equation}
\label{FchiuA}
F(\chi, u; A):= \int_A f(\chi (x), \nabla u(x))\, dx + |D \chi|(A),
\end{equation}
and 
we define the relaxed functionals
\begin{align}
\mathcal{F}\left(  \chi,u;A\right)   &  :=\inf\left\{  
\underset{n\rightarrow +\infty}{\lim\inf}\,
F\left(\chi_{n},u_{n};A\right)  :  u_{n}
\in W^{1,q}\left(A;\mathbb{R}^{d}\right),
\chi_{n}
\in BV\left(  A;\left\{  0,1\right\}  \right),  
\right. \label{introrelaxed}\\
&  \hspace{3cm} \left.  u_{n}\rightharpoonup u\text{ in }
W^{1,p}\left(  A;\mathbb{R}^{d}\right),
\chi_{n}\overset{\ast}{\rightharpoonup}\chi\text{ in }
BV\left(A;\left\{  0,1\right\}  \right)  \right\}\nonumber
\end{align}
and
\begin{align}
\mathcal{F}_{\operatorname*{loc}}\left(  \chi,u;A\right)   &  
:=\inf\left\{
\underset{n\rightarrow +\infty}{\lim\inf}\, F\left(  \chi_{n},u_{n};A\right)
: u_{n}  \in W_{\operatorname*{loc}}^{1,q}
\left(A;\mathbb{R}^{d}\right),
 \chi_{n}  \in 
BV\left(A;\left\{0,1\right\}  \right),  \right. \label{introrelaxedloc}\\
&  \hspace{3cm} \left.  u_{n}\rightharpoonup u\text{ in }W^{1,p}
\left(  A;\mathbb{R}^{d}\right), 
\chi_{n}\overset{\ast}{\rightharpoonup}\chi\text{ in }
BV\left(A;\left\{  0,1\right\}  \right)\right\}. \nonumber
\end{align}
Notice that in these functionals there is a gap between the space of admissible
macroscopic fields $u \in W^{1,p}\left(A;\mathbb{R}^{d}\right)$ and the smaller
space $W^{1,q}\left(A;\mathbb{R}^{d}\right)$ where the growth hypothesis \eqref{growth}
ensures boundedness of the energy. Thus, assuming also that
the following coercivity condition holds
\begin{equation}\label{coer}
\exists \alpha > 0 : W_i(\xi) \geq \alpha |\xi|^p, \; \; \; \forall \xi \in \Rb^{d \times N},
\end{equation}
sequences of deformations
$u_n \in W^{1,q}\left(A;\mathbb{R}^{d}\right)$ that have bounded energy will be weakly compact in
$W^{1,p}\left(A;\mathbb{R}^{d}\right)$, but not necessarily in $W^{1,q}\left(A;\mathbb{R}^{d}\right)$,
so it may be possible to energetically approach functions $u \in W^{1,p} \setminus W^{1,q}$ and the aim of the 
above relaxed functionals is to provide the effective energy associated to $u$.

We recall that a sequence $\chi_n \to \chi$ in BV weak * if and only if $\chi_n$ is bounded
in BV and $\chi_n \to \chi$ in the strong topology of $L^1$. Due to this fact, and to the
expression of the energy \eqref{F}, in the above functionals we could also have taken
$\chi_n \to \chi$ in $L^1\left(A;\left\{  0,1\right\}  \right)$, obtaining in each case
the same infimum. However, replacing the $W^{1,p}$ weak convergence of the sequence $u_n$
by its strong convergence in $L^p$ does not yield the same infimum, unless a coercivity 
condition of the type \eqref{coer} is considered.

We will also consider the case $p=1$ and 
$1 \leq q < \frac{N}{N-1}$ ($1 \leq q < + \infty$ when $N=1$),
so we define
\begin{align}
\mathcal{F}_1\left(\chi,u;A\right)   &  :=\inf\left\{  
\underset{n\rightarrow +\infty}
{\lim\inf}\, F\left(\chi_{n},u_{n};A\right)  :
u_{n}\in W^{1,q}\left(  A;\mathbb{R}^{d}\right),
\chi_{n}\in BV\left(  A;\left\{  0,1\right\}  \right),\right. 
\label{relaxed1}\\
&  \hspace{3cm} \left.  u_{n}\overset{\ast}{\rightharpoonup} u
\text{ in } BV\left(A;\mathbb{R}^{d}\right),
\chi_{n}\overset{\ast}{\rightharpoonup}\chi
\text{ in } BV\left(A;\left\{0,1\right\}  \right)  \right\}, \nonumber
\end{align}
and, analogously, by replacing $W^{1,q}$ by $W^{1,q}_{\rm loc}$,
$
\mathcal{F}_{1, {\rm loc}}\left(\chi,u;A\right).
$
Contrary to what happens with regard to the topology used for the 
convergence of the sequence $\chi_n$,
as in the case $p>1$, taking $L^1$ strong or BV weak * convergence for the sequence $u_n$ does not give 
the same infimum, unless a coercivity condition of the type \eqref{coer} is considered.

Similar functionals were considered by Soneji in \cite{Soneji} and 
\cite{S1} in the case where there is
no dependence on the field $\chi$.

Our goal in this paper is to investigate whether the functionals \eqref{introrelaxed}, \eqref{introrelaxedloc}, and their variants when $p=1$, can be represented by certain
Radon measures defined on the open subsets of $\overline{\Omega}$ and, if so, whether a characterisation
of these measures can be obtained. As it turns out a (strong) measure representation is only true for \eqref{introrelaxedloc}
(see Theorem \ref{measureFloc} where some information on the corresponding measure is provided under some convexity assumptions), whereas \eqref{introrelaxed} only admits a weak measure representation
(cf. Definition \ref{measrep} and Theorem \ref{1<p<q}). In the one
dimensional case we provide a characterisation
of these measures (see Proposition \ref{lb1D}). 

In the case independent of the field $\chi$, and when $p=q$, it is well known that
\begin{align*}
\mathcal{F}(u;A) = & \inf\left\{  \underset{n\rightarrow +\infty}{\liminf}
\int_{\Omega}f(\nabla u_n(x)) \, dx  :  u_{n}
\in W^{1,q}\left(A;\mathbb{R}^{d}\right), 
 u_{n}\rightharpoonup u\text{ in }
W^{1,p}\left(  A;\mathbb{R}^{d}\right) \right\} \\
= & \int_{\Omega}Qf(\nabla u(x)) \, dx, 
\end{align*}
where $Qf$ denotes the quasiconvex envelope of $f$ (see Definition \ref{qcxenv}).
If $q > \frac{Np}{N-1}$ it may happen that $\mathcal{F}(u;\Omega) = 0$ (see \cite{BM}), and
if $q = \frac{Np}{N-1}$ then $\mathcal{F}(u;\cdot)$ may not even be subadditive (see \cite{ADM}).

Assuming the above growth and coercivity hypotheses on the density $f$, 
\begin{equation}\label{gc}
\alpha |\xi|^p \leq f(\xi) \leq \beta (1 + |\xi|^q),
\end{equation}
and relying on the existence of
a trace-preserving linear operator from $W^{1,p}$ to $W^{1,p}$ which improves the integrability of
$u$ and $\nabla u$ (see Lemma \ref{Lemma2.4FMy}), it was shown by Fonseca and Mal\'y
in \cite{FMy} that the functionals
\begin{equation}\nonumber
\mathcal{F}^{q,p}\left(u;A\right)  :=\inf\left\{  
\underset{n\rightarrow +\infty}{\lim\inf}\,
\int_{\Omega}f(\nabla u_n(x)) \, dx  :  u_{n}
\in W^{1,q}\left(A;\mathbb{R}^{d}\right), 
 u_{n}\rightharpoonup u\text{ in }
W^{1,p}\left(  A;\mathbb{R}^{d}\right) \right\}
\end{equation}
and
$\mathcal{F}^{q,p}_{{\rm loc}}\left(u;A\right)$ (defined as above just replacing $W^{1,q}$ by $W^{1,q}_{\rm loc}$) 
when finite, admit a weak measure representation in the first case, and a strong measure
representation in the second. In the latter case, denoting by $\mu_a$
the density of the corresponding measure $\mu$ with respect to the Lebesgue measure, they also showed
that 
$$\mu_a (x_0) \geq Qf(\nabla u (x_0)), \; \; \forall u \in 
W^{1,p}\left(\Omega;\mathbb{R}^{d}\right), \; \; {\rm a.e.} \; \; x_0 \in \Omega.$$
The reverse inequality was obtained in \cite{BFM}, thus fully identifying the bulk part
of the measure $\mu$. 

We also refer to Acerbi, Bouchitt\'e and Fonseca \cite{ABF} where the case of inhomogeneous densities
$h(x,\xi)$ is treated. Assuming convexity of $h$ with respect to the second variable,
as well as a growth condition of the type \eqref{gc}, it is shown that
$$ \mathcal{F}^{q,p}_{{\rm loc}}\left(u;A\right) = \int_A h(x,\nabla u(x)) \, dx + \mu_s(u;A),$$
where $\mu_s(u,\cdot)$ is a non-negative Radon measure, singular with respect to the Lebesgue measure.

Finally, we mention the results of Coscia and Mucci \cite{CM} and 
Mucci \cite{Mucci}. In their work, the authors consider relaxation and integral representation of integral functionals of the type $\displaystyle \int_{\Omega}h(x,\nabla u(x)) \, dx$, when $h$ satisfies
\begin{equation}
\label{pxgrowth1}
|\xi|^{p(x)} \leq h(x,\xi) \leq C(1 + |\xi|^{p(x)}),
\end{equation}
so that the integrability exponent $p(x)$ of the admissible fields depends in a continuous or regular piecewise continuous way on the location in the body (see \cite[assumptions (2.9) and (2.1) and Definitions 2.1 and 2.2]{CM}, respectively). We point out that
the bulk energy density in \eqref{F} can be seen as satisfying a 
generalization of \eqref{pxgrowth1} to the case of a discontinuous 
exponent $p(x)$, we refer to  Remark \ref{remCosciaMuccietalia} for
more details.

\smallskip

The paper is organized as follows. In Section 2 we set the notation 
and we provide some definitions and results
which will be used throughout the paper, whereas in
Section 3 we prove some auxiliary results. The main representation 
theorems are proved in Section 4, where we provide
a partial characterisation of the measures which represent 
\eqref{introrelaxed} and \eqref{introrelaxedloc} 
in the convex case and a full characterisation 
in the one dimensional setting, we also obtain a sufficient 
condition for lower semicontinuity of \eqref{F}. Finally, in Section 
5 we give an application to a 3$D$-1$D$ dimension reduction problem.

\section{Preliminaries}

In this section we fix notations and quote some definitions and 
results dealing with sets of finite perimeter and several notions of 
quasiconvexity that will be used in the sequel.

Throughout the text $\Omega \subset \mathbb R^{N}$ will denote an open, bounded set.

We will use the following notations:
\begin{itemize}
\item ${\mathcal O}(\Omega)$ is the family of all open subsets
of $\Omega $;
\item $\mathcal M (\Omega)$ is the set of finite Radon
measures on $\Omega$;
\item $\mathcal L^{N}$ and $\mathcal H^{N-1}$ stand for the  $N$-dimensional Lebesgue measure 
and the $\left(  N-1\right)$-dimensional Hausdorff measure in $\mathbb R^N$, respectively;
\item $\left |\mu \right |$ stands for the total variation of a measure  $\mu\in \mathcal M (\Omega)$; 
\item the symbol $d x$ will also be used to denote integration with respect to $\mathcal L^{N}$;
\item  $C$ represents a generic positive constant that may change from line to line.
\end{itemize}

We start by recalling a well known result due to Ioffe 
\cite[Theorem 1.1]{Ioffe}.
\begin{theorem}\label{thm2.4ABFvariant}
Let $g:\mathbb R^m\times \mathbb R^{d \times N} \to [0, +\infty)$ 
be a Borel integrand such that $g(b,\cdot)$ is convex for every 
$b \in \mathbb R^m$. Then the functional 
$$
G(v,u):=\int_{\Omega}g(v(x), \nabla u(x)) \, dx
$$
is lower semicontinuous in 
$L^1(\Omega;\mathbb R^m)_{\rm strong} \times 
W^{1,1}(\Omega;\mathbb R^d)_{\rm weak}$. 
\end{theorem}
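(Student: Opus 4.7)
The plan is to use the classical representation of a nonnegative convex Borel integrand as a countable supremum of affine functions with continuous bounded coefficients, and then to exploit this representation to reduce lower semicontinuity of $G$ to a statement about linear functionals of $\nabla u_n$ whose coefficients depend continuously on $v_n$.

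First, I would invoke the De Giorgi--Ioffe representation: since $g \geq 0$ is Borel with $g(b,\cdot)$ convex, there exist continuous bounded functions $\alpha_k : \mathbb{R}^m \to \mathbb{R}$ and $\beta_k : \mathbb{R}^m \to \mathbb{R}^{d \times N}$ such that
$$g(b,\xi) = \sup_{k \in \mathbb{N}} \bigl(\alpha_k(b) + \beta_k(b) \cdot \xi\bigr).$$
It then follows that for every finite Borel partition $\{E_1,\dots,E_J\}$ of $\Omega$ and every selection of indices $k_1,\dots,k_J$, one has
$$G(v,u) \geq \sum_{j=1}^J \int_{E_j} \bigl(\alpha_{k_j}(v) + \beta_{k_j}(v) \cdot \nabla u\bigr)\, dx,$$
and in fact $G(v,u)$ equals the supremum of the right-hand side over all such partitions and selections. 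So it suffices to prove that each such ``elementary'' functional is sequentially continuous under the prescribed convergence.

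Second, I would verify this continuity. Fix a partition and indices, and pass to a subsequence so that $v_n \to v$ almost everywhere. By continuity and boundedness of $\alpha_{k_j}$ and $\beta_{k_j}$, dominated convergence gives $\alpha_{k_j}(v_n) \to \alpha_{k_j}(v)$ and $\beta_{k_j}(v_n) \to \beta_{k_j}(v)$ in $L^p(\Omega)$ for every $p<\infty$. The $\alpha_{k_j}$-terms then converge directly. The cross terms $\int_{E_j} \beta_{k_j}(v_n) \cdot \nabla u_n\, dx$ are the main obstacle, because $\beta_{k_j}(v_n)$ converges only strongly in $L^p$ while $\nabla u_n$ converges only weakly in $L^1$; no naive product argument applies. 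The remedy is to combine Egorov's theorem with the Dunford--Pettis equi-integrability of $\{\nabla u_n\}$ (automatic from its weak $L^1$ compactness): for $\delta > 0$ select $A_\delta \subset \Omega$ with $|\Omega \setminus A_\delta| < \delta$ on which $\beta_{k_j}(v_n) \to \beta_{k_j}(v)$ uniformly, bound the integral over $\Omega \setminus A_\delta$ via equi-integrability, and exploit weak $L^1$ convergence (against the bounded $\beta_{k_j}(v)$) on $A_\delta$; sending $\delta \to 0$ yields
$$\int_{E_j} \beta_{k_j}(v_n) \cdot \nabla u_n\, dx \;\longrightarrow\; \int_{E_j} \beta_{k_j}(v) \cdot \nabla u\, dx.$$

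Combining these two steps, for any partition and index choice,
$$\sum_{j=1}^J \int_{E_j} \bigl(\alpha_{k_j}(v) + \beta_{k_j}(v) \cdot \nabla u\bigr) dx = \lim_{n \to \infty} \sum_{j=1}^J \int_{E_j} \bigl(\alpha_{k_j}(v_n) + \beta_{k_j}(v_n) \cdot \nabla u_n\bigr) dx \leq \liminf_{n \to \infty} G(v_n,u_n).$$
Taking the supremum of the left-hand side over all admissible partitions and index selections produces $G(v,u) \leq \liminf_n G(v_n,u_n)$; a standard subsequence argument removes the passage to subsequences made in the second step. The only genuinely delicate point in the whole argument is the weak/strong mismatch between $\{v_n\}$ and $\{\nabla u_n\}$, and this is precisely what Egorov combined with equi-integrability handles.
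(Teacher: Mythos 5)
The paper itself offers no proof of this statement: it is recalled verbatim as Ioffe's lower semicontinuity theorem, with a pointer to \cite[Theorem 1.1]{Ioffe}. Your argument is, in substance, the standard proof of that theorem: the De Giorgi representation of a nonnegative convex normal integrand as a countable supremum of functions affine in $\xi$ with continuous bounded coefficients in $b$; the chopping lemma identifying $G(v,u)$ with the supremum of the elementary functionals over finite Borel partitions and index selections; and the Egorov/Dunford--Pettis device to pass to the limit in $\int_{E_j}\beta_{k_j}(v_n)\cdot\nabla u_n\,dx$ when $\beta_{k_j}(v_n)$ converges boundedly a.e.\ (after extracting an a.e.\ convergent subsequence from the $L^1$ convergence of $v_n$) while $\nabla u_n$ converges only weakly in $L^1$. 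All three steps are sound, the weak/strong mismatch is treated exactly as it must be, and the final subsequence bookkeeping is standard. So the proposal is a correct, self-contained proof of a result the paper merely cites.

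One caveat is worth recording. A countable supremum of the form $\sup_k\bigl(\alpha_k(b)+\beta_k(b)\cdot\xi\bigr)$ with $\alpha_k,\beta_k$ continuous is automatically lower semicontinuous in $(b,\xi)$, so your first step tacitly requires $g$ to be lower semicontinuous in $b$ (i.e.\ a normal integrand), not merely Borel as the statement says. This is a defect of the statement rather than of your proof: with Borel dependence on $b$ alone the conclusion is false --- take $g(b,\xi)=h(b)$ with $h\geq 0$ bounded Borel but not lsc, say $h(0)=1$ and $h(b)=0$ for $b\neq 0$, and $v_n\equiv 1/n\to v\equiv 0$ in $L^1$, so that $\liminf_n G(v_n,u_n)=0<|\Omega|=G(v,u)$. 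Ioffe's actual hypothesis is joint lower semicontinuity in $(b,\xi)$, which holds trivially in the paper's application $g(b,\xi)=bW_1(\xi)+(1-b)W_2(\xi)$ with $W_i$ continuous. With that hypothesis made explicit, your proof goes through in full.
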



In the following we give some preliminary notions related with sets of finite
perimeter. For a detailed treatment we refer to \cite{AFP}.

	A function $w\in L^{1}(\Omega;{\mathbb{R}}^{d})$ is said to be of
	\emph{bounded variation}, and we write $w\in BV(\Omega;{\mathbb{R}}^{d})$, if
	all its first order distributional derivatives $D_{j}w_{i}$ belong to $\mathcal{M}%
	(\Omega)$ for $1\leq i\leq d$ and $1\leq j\leq N$.

The matrix-valued measure whose entries are $D_{j}w_{i}$ is denoted by $Dw$
and $|Dw|$ stands for its total variation.
We observe that if $w\in BV(\Omega;\mathbb{R}^{d})$ then $w\mapsto|Dw|(\Omega)$ is lower
semicontinuous in $BV(\Omega;\mathbb{R}^{d})$ with respect to the
$L_{\mathrm{loc}}^{1}(\Omega;\mathbb{R}^{d})$ topology.

\begin{definition}
	\label{Setsoffiniteperimeter} Let $E$ be an $\mathcal{L}^{N}$- measurable
	subset of $\mathbb{R}^{N}$. For any open set $\Omega\subset\mathbb{R}^{N}$ the
	{\em perimeter} of $E$ in $\Omega$, denoted by $P(E;\Omega)$, is the variation of
	$\chi_{E}$ in $\Omega$, i.e.
	\begin{equation}
	\label{perimeter}P(E;\Omega):=\sup\left\{  \int_{E} \mathrm{div}\varphi(x) \,dx:
	\varphi\in C^{1}_{c}(\Omega;\mathbb{R}^{d}), \|\varphi\|_{L^{\infty}}%
	\leq1\right\}  .
	\end{equation}
	We say that $E$ is a {\em set of finite perimeter} in $\Omega$ if $P(E;\Omega) <+
	\infty.$
\end{definition}

Recalling that if $\mathcal{L}^{N}(E \cap\Omega)$ is finite, then $\chi_{E}
\in L^{1}(\Omega)$, by \cite[Proposition 3.6]{AFP}, it follows
that $E$ has finite perimeter in $\Omega$ if and only if $\chi_{E} \in
BV(\Omega)$ and $P(E;\Omega)$ coincides with $|D\chi_{E}|(\Omega)$, the total
variation in $\Omega$ of the distributional derivative of $\chi_{E}$.
Moreover,  a
generalized Gauss-Green formula holds:
\begin{equation}\nonumber
{\int_{E}\mathrm{div}\varphi(x) \, dx
=\int_{\Omega}\left\langle\nu_{E}(x),\varphi(x)\right\rangle \, d|D\chi_{E}|,
\;\;\forall\,\varphi\in C_{c}^{1}(\Omega;\mathbb{R}^{d})},
\end{equation}
where $D\chi_{E}=\nu_{E}|D\chi_{E}|$ is the polar decomposition of $D\chi_{E}$.

We also recall that, when dealing with sets of finite measure, a sequence of
sets $E_{n}$ converges to $E$ in measure in $\Omega$ if $\mathcal{L}%
^{N}(\Omega\cap(E_{n}\Delta E))$ converges to $0$ as $n\rightarrow +\infty$,
where $\Delta$ stands for the symmetric difference. 
This convergence is equivalent to $L^{1}(\Omega)$ 
convergence of the characteristic functions of the corresponding 
sets.

In order to compare several notions of quasiconvexity, we start by 
recalling the one introduced by Morrey.
\begin{definition}\label{Morrey-qcx}
A Borel measurable and locally bounded function $f:\mathbb R^{d\times N}\to \mathbb R$ is said to be 
{\em quasiconvex} if
\begin{equation}\label{Mqcx}
f(\xi)\leq\frac{1}{{\mathcal L}^N(D)}\int_D 
f(\xi+\nabla \varphi(x)) \, dx,
\end{equation}
for every bounded, open set $D\subset \mathbb R^N$, for every $\xi \in \mathbb R^{d\times N}$ and for 
every $\varphi \in W^{1,\infty}_0(D;\mathbb R^d)$.  
\end{definition}

\begin{remark}\label{Mqcxobservation}
{\rm We recall that if \eqref{Mqcx} holds for a certain set $D$, then it holds for any bounded, open set 
in $\mathbb R^N$.
Notice also that, in the above definition, the value $+\infty$ is excluded from the range of $f$.}
\end{remark}

The following notion of $W^{1,p}$-quasiconvexity was introduced by Ball and Murat in \cite{BM}.

\begin{definition}\label{W1pqcxBM}
Let $p\in [1,+\infty]$ and let $f:\mathbb R^{d\times N}\to \mathbb (-\infty,+\infty]$ be Borel measurable 
and bounded below. $f$ is said to be $W^{1,p}$-{\em quasiconvex}  if
\begin{equation}\label{BMqcxineq}
f(\xi)\leq\frac{1}{{\mathcal L}^N(D)}\int_D f(\xi+\nabla \varphi(x)) \, dx,
\end{equation}
for every bounded, open set $D\subset \mathbb R^N$, with ${\mathcal L}^N(\partial D)=0$,  
for every $\xi \in \mathbb R^{d\times N}$ and for every $\varphi \in W^{1,p}_0(D;\mathbb R^d)$. 
\end{definition}

\begin{remark}\label{BMremark}
{\rm Definition \ref{W1pqcxBM} coincides with quasiconvexity for finite functions $f$ and when $p=+\infty$.
	
\smallskip
If $f$ is $W^{1,p}$- quasiconvex, then it is $W^{1,q}$-quasiconvex for all $q$ with $p\leq q\leq +\infty$, so 
$W^{1,1}$-quasiconvexity is the strongest notion and $W^{1,\infty}$-quasiconvexity is the weakest notion.
	
\smallskip
	
The following facts were established in \cite[Proposition 2.4]{BM}.
If $f$ is bounded below, upper semicontinuous and
\begin{equation}\label{qgrowthabove}
f(\xi)\leq K(1+|\xi|^q), \hbox{ for every } \xi \in \mathbb R^{d\times N},
\end{equation}
then $f$ is $W^{1,q}$-quasiconvex if and only if it is $W^{1,\infty}$-quasiconvex.
	
\smallskip
\noindent If there exist $K_1\in (0,+\infty)$ and $K_2 \in \Rb$ such that
\begin{equation}\label{pgrowthbelow}
 K_1|\xi|^p + K_2 \leq f(\xi), \hbox{ for every } \xi \in \mathbb R^{d\times N},
\end{equation}
for $p \in [1,+\infty)$, then $f$ is $W^{1,1}$-quasiconvex if and only if it is $W^{1,p}$-quasiconvex.}
\end{remark}

It is known that the above notions, when applied to 
$f(b,\cdot) := bW_1(\cdot) + (1-b)W_2(\cdot)$, are necessary and sufficient conditions for lower 
semicontinuity of the functional $F(\chi, u)$ in \eqref{F}, when $f$ satisfies both \eqref{pgrowthbelow} and 
\eqref{qgrowthabove} with the same exponent $p=q >1$.
The sufficiency of $W^{1,p}$- quasiconvexity for the lower semicontinuity of $F(\chi,u)$ is no longer true in the case 
$p < q$, see Gangbo \cite{G} for an example.
 
The notion of closed $W^{1,p}$-quasiconvexity, which we recall next, 
was introduced by Pedregal in \cite{P} and used by
Kristensen in \cite{K} in order to provide sufficient conditions for 
the lower semicontinuity of integral functionals of the type 
$$\int_{\Omega}f(\nabla u(x)) \, dx,$$ 
assuming that $f$ satisfies \eqref{pgrowthbelow}. 
It requires the use of Young measures, for more details we refer to \cite[Theorem 2.2]{FMAq}. 

Let ${\mathcal M}^p$ be the set of probability measures $\mu $ on $\mathbb R^{d \times N}$ with finite $p$-th moment, 
i.e. such that 
$\left\langle\mu, |\cdot|^p\right\rangle \; < +\infty,$ when 
$p < +\infty$, and with bounded support when $p= +\infty$, and such 
that the following Jensen's inequality,
\begin{equation}\label{Jensen}
\int_{\mathbb R^{d \times N}} f(\xi) \, d \mu \geq f(\overline \mu), \hbox{ where } \overline \mu:= \left\langle\mu, {\rm id}\right\rangle,
\end{equation}
holds for all $W^{1,\infty}$-quasiconvex functions $f:\mathbb R^{d \times N}\to \mathbb R$ for which there exists 
a constant $c=c(f)$ such that  $|f(\xi)|\leq C(1+|\xi|^p)$. 

The Young measure $\mu$ is said to be homogeneous if there is a Radon measure $\mu_0 \in  {\mathcal M}(\mathbb R^d)$ 
such that $\mu_x = \mu_0$ for a.e. $x \in \Omega$.

In \cite{KP1, KP2}, the set ${\mathcal M}^p$ is shown to coincide with the homogeneous $W^{1,p}$-gradient Young measures.
We recall that a Young measure $\mu$ is called a gradient Young measure if it is generated by a
sequence of gradients, more precisely, $\mu$ is a $W^{1,p}$- gradient Young measure if it is
generated by $\nabla u_n$ and $u_n\rightharpoonup u$ in $W^{1,p}(\Omega;\mathbb R^d)$. 

Also for each $p \in (1,+\infty)$,
$$
{\mathcal M}^1\cap \{\mu: \int_{\mathbb R^{d \times N}}|\xi|^p \, d \mu(\xi) < +\infty\}= {\mathcal M}^p.
$$

\begin{definition}\label{closedW1pqcx}
The integrand $f:\mathbb R^{d \times N} \to \mathbb R \cup \{+\infty\}$ is said to be {\em closed $W^{1,p}$-quasiconvex}, 
if it is lower semicontinuous and Jensen's inequality \eqref{Jensen} holds for all $\mu \in {\mathcal M}^p$. 
\end{definition}

\begin{definition}\label{qcxenv}
The {\em quasiconvex envelope} (respectively, {\em $W^{1,p}$-quasiconvex envelope}, 
{\em closed $W^{1,p}$-qua\-si\-con\-vex envelope}) of $f$ is the greatest
quasiconvex (respectively, $W^{1,p}$-quasiconvex, closed $W^{1,p}$-qua\-si\-con\-vex) function that 
is less than or equal to $f$.
\end{definition} 

The following result (see \cite[Proposition 2.5]{FMAq}) will be used in the sequel.

\begin{proposition}\label{FMYoungmeasures}
If $v_n$ generates a Young measure $\nu$ and $u_n \to u$ in measure, then the pair $(u_n,v_n)$ generates the 
Young measure $\mu$ defined as
$$
\mu_x:= \delta_{u(x)}\otimes \nu_x, \hbox{ for a.e. }x \in \Omega.
$$
\end{proposition}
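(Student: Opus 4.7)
The plan is to verify the defining property of the generated Young measure directly. Specifically, I will show that for every $\varphi\in C_c(\mathbb{R}^m\times\mathbb{R}^{d\times N})$ and every Borel set $B\subset\Omega$,
$$\int_B \varphi(u_n(x),v_n(x))\,dx \;\longrightarrow\; \int_B \int_{\mathbb{R}^{d\times N}}\varphi(u(x),\xi)\,d\nu_x(\xi)\,dx,$$
which identifies the Young measure generated by $(u_n,v_n)$ as $\mu_x=\delta_{u(x)}\otimes\nu_x$, since by the definition of the tensor product the right-hand side equals $\int_B\!\int \varphi(s,\xi)\,d\mu_x(s,\xi)\,dx$. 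The natural decomposition is
$$\int_B\varphi(u_n,v_n)\,dx \;=\; \underbrace{\int_B\bigl[\varphi(u_n,v_n)-\varphi(u,v_n)\bigr]\,dx}_{I_n^B} \;+\; \underbrace{\int_B\varphi(u,v_n)\,dx}_{J_n^B},$$
which allows the two hypotheses to be exploited separately.

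For $I_n^B$, compact support of $\varphi$ yields uniform continuity, so for every $\varepsilon>0$ there exists $\delta>0$ such that $|s-s'|<\delta$ implies $|\varphi(s,\xi)-\varphi(s',\xi)|<\varepsilon$ uniformly in $\xi$. Splitting $B = \bigl(B\cap\{|u_n-u|<\delta\}\bigr)\cup\bigl(B\cap\{|u_n-u|\geq\delta\}\bigr)$ bounds $|I_n^B|$ by $\varepsilon\,\mathcal{L}^N(B)+2\|\varphi\|_\infty\,\mathcal{L}^N(\{|u_n-u|\geq\delta\})$. Convergence in measure $u_n\to u$ forces the second term to $0$, and since $\varepsilon>0$ is arbitrary, $I_n^B\to 0$.

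For $J_n^B$, the map $(x,\xi)\mapsto \chi_B(x)\,\varphi(u(x),\xi)$ is a Carathéodory integrand with compact $\xi$-support and uniform bound $\|\varphi\|_\infty\chi_B(x)\in L^1(\Omega)$. Since $v_n$ generates $\nu$ by hypothesis, one has
$$J_n^B\;\longrightarrow\; \int_B\int_{\mathbb{R}^{d\times N}}\varphi(u(x),\xi)\,d\nu_x(\xi)\,dx,$$
which, combined with $I_n^B\to 0$, completes the argument. The main delicate point is that no pointwise information on $v_n$ is available, so Egorov-type substitutions on that sequence are ruled out; the proof succeeds precisely because the replacement $\varphi(u_n,v_n)\leadsto\varphi(u,v_n)$ uses only uniform continuity of $\varphi$ in its first variable together with convergence in measure of $u_n$, leaving the $v_n$-dependence of the integrand intact and available for the hypothesized Young-measure limit.
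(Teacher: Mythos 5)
Your proof is correct: the splitting $\varphi(u_n,v_n)=[\varphi(u_n,v_n)-\varphi(u,v_n)]+\varphi(u,v_n)$, with the first term controlled by uniform continuity of $\varphi$ together with convergence in measure of $u_n$ and the second handled by the fundamental theorem on Young measures applied to the Carath\'eodory integrand $(x,\xi)\mapsto\chi_B(x)\varphi(u(x),\xi)$, is exactly the classical argument. The paper itself does not prove this statement but quotes it from \cite[Proposition 2.5]{FMAq}, and your argument coincides with the standard proof given there.
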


We conclude by recalling the notions of weak and strong representations by means of measures.

\begin{definition}\label{measrep}
	Let $\mu$ be a Radon measure on $\overline{\Omega}.$ We say that
	
	\begin{enumerate}
		\item[a)] $\mu$ {\em (strongly) represents} 
		$\mathcal{F}\left(  \chi,u;\cdot\right)$ 
		if $\mu\left(  A\right)  =\mathcal{F}\left(  \chi,u;A\right)$ 
		for all open sets $A\subset\Omega;$
		
		\item[b)] $\mu~$ {\em weakly represents} 
		$\mathcal{F}\left(  \chi,u;\cdot\right)$
		if $\mu\left(  A\right)  \leq\mathcal{F}\left(\chi,u;A\right)  
		\leq \mu\left(  \overline{A}\right)$ for all open sets $A\subset\Omega.$
	\end{enumerate}
\end{definition}

\section{Auxiliary Results}

In this section we prove some auxiliary results which will be used in the sequel
in the proofs of our representation theorems. We begin by showing that, under
the coercivity assumption (\ref{coer})
the infima in \eqref{introrelaxed} and \eqref{introrelaxedloc} are attained.

\begin{proposition}\label{attain}
Let $f$ be as in \eqref{density} where $W_i$, $i = 1,2$, satisfy the growth condition \eqref{growth},
as well as the coercivity condition \eqref{coer}. Let $\mathcal F(\chi,u;\Omega)$ be as in
\eqref{introrelaxed}. 
\begin{itemize}
\item[i)] If $\mathcal F(\chi,u;\Omega) < + \infty$, then it is attained, that is, there exist sequences
$u_n \in W^{1,q}(\Omega;\Rb^d)$, $\chi_n \in BV(\Omega;\{0,1\})$ such that
$u_n \weak u$ in $W^{1,p}(\Omega;\Rb^d)$, $\chi_n \weakstar \chi$ in $BV(\Omega;\{0,1\})$ and
$$\mathcal F(\chi,u;\Omega) = \lim_{n \to + \infty}F(\chi_n,u_n;\Omega).$$
\item[ii)] If $u_n \in W^{1,q}(\Omega;\Rb^d)$, $\chi_n \in BV(\Omega;\{0,1\})$ are
sequences such that $u_n \weak u$ in $W^{1,p}(\Omega;\Rb^d)$, $\chi_n \weakstar \chi$ in $BV(\Omega;\{0,1\})$
and if $\mathcal F(\chi_n,u_n;\Omega) < + \infty$, $\forall n$, then
$$\mathcal F(\chi,u;\Omega) \leq \liminf_{n \to + \infty}\mathcal F(\chi_n,u_n;\Omega).$$
\item[iii)] In the case $p=1$, $1 \leq q < \frac{N}{N-1}$ the above results also hold for
\eqref{relaxed1} under the assumption $W_i(\xi) \geq \alpha |\xi|$, for some $\alpha > 0$ and for all 
$\xi \in \Rb^{d \times N}$.
\end{itemize}
Similar statements hold for the functionals $\mathcal F_{\rm loc}$
given in \eqref{introrelaxedloc} and $\mathcal F_{1, \rm loc}$. 
\end{proposition}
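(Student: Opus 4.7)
The statement is a routine but careful diagonalization based on the coercivity hypothesis \eqref{coer} and on the metrizability of the relevant convergences on energy-bounded sets. First I would address part (i). By definition of $\mathcal{F}(\chi,u;\Omega)$, for each $k\in\mathbb{N}$ I choose admissible sequences $(\chi_n^k,u_n^k)_n$ with $u_n^k\weak u$ in $W^{1,p}$, $\chi_n^k\weakstar\chi$ in $BV$, and
\[
\liminf_{n\to\infty}F(\chi_n^k,u_n^k;\Omega)\le \mathcal{F}(\chi,u;\Omega)+\tfrac{1}{k}.
\]
After passing to subsequences in $n$, I may assume this liminf is an honest limit, so $F(\chi_n^k,u_n^k;\Omega)\le \mathcal{F}(\chi,u;\Omega)+\tfrac{2}{k}$ for all $n$ large. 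The coercivity \eqref{coer} bounds $\|\nabla u_n^k\|_{L^p}$ uniformly, and the weak limit $u_n^k\weak u$ bounds $\|u_n^k\|_{L^p}$, so $(u_n^k)$ is $W^{1,p}$-bounded uniformly in $n,k$; similarly $(\chi_n^k)$ is uniformly bounded in $BV$.

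Since $p>1$, $W^{1,p}(\Omega;\mathbb{R}^d)$ is reflexive and separable, so its weak topology is metrizable on bounded sets; on bounded $BV$-sets of characteristic functions, weak-$*$ convergence reduces to $L^1$-strong convergence (as recalled in the introduction) and is therefore metrizable too. Let $d_W,d_{BV}$ be the corresponding metrics. For each $k$ I select $n_k$ so that
\[
d_W(u_{n_k}^k,u)+d_{BV}(\chi_{n_k}^k,\chi)\le\tfrac{1}{k},\qquad F(\chi_{n_k}^k,u_{n_k}^k;\Omega)\le \mathcal{F}(\chi,u;\Omega)+\tfrac{3}{k}.
\]
Then $u_{n_k}^k\weak u$ in $W^{1,p}$, $\chi_{n_k}^k\weakstar\chi$ in $BV$, and the diagonal sequence realizes $\mathcal{F}(\chi,u;\Omega)$ as an actual limit, proving (i). For (ii), I apply (i) to each pair $(\chi_n,u_n)$ to produce an internal almost-minimizing sequence $(\chi_m^n,u_m^n)_m$ with the right convergences to $(\chi_n,u_n)$ and $F(\chi_m^n,u_m^n;\Omega)\to \mathcal{F}(\chi_n,u_n;\Omega)$. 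A second diagonal extraction, based on exactly the same metrizability, yields indices $m_n$ with $(\chi_{m_n}^n,u_{m_n}^n)$ admissible for $(\chi,u)$ and
\[
\liminf_{n\to\infty}F(\chi_{m_n}^n,u_{m_n}^n;\Omega)\le\liminf_{n\to\infty}\mathcal{F}(\chi_n,u_n;\Omega),
\]
so that (ii) follows by the definition of $\mathcal{F}(\chi,u;\Omega)$.

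Part (iii) is structurally identical: the linear lower bound $W_i(\xi)\ge\alpha|\xi|$ yields the uniform $BV$-bound on $(u_n^k)$, while the weak-$*$ topology of $BV(\Omega;\mathbb{R}^d)$ is metrizable on bounded sets through the compact embedding into $L^1$, so the same diagonal procedure applies to $\mathcal{F}_1$. For the localized functionals $\mathcal{F}_{\mathrm{loc}}$ and $\mathcal{F}_{1,\mathrm{loc}}$, nothing changes in the argument, since the outer convergences of $u_n$ and $\chi_n$ are still prescribed on all of $A$, only the integrability of $u_n$ being weakened from $W^{1,q}$ to $W^{1,q}_{\mathrm{loc}}$. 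The main obstacle is purely a bookkeeping one: the diagonal indices $n_k$ (respectively $m_n$) must simultaneously control \emph{two} different convergences and the near-minimality of the energy; metrizability of both topologies on energy-bounded sets, guaranteed by coercivity, is precisely what makes the standard Attouch-type diagonal extraction succeed. Without \eqref{coer} (or its $p=1$ analogue in (iii)) the compactness of the approximating sequences would be lost and attainment could fail.
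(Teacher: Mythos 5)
Your proposal is correct and follows essentially the same route as the paper: a diagonal extraction over near-optimal admissible sequences, using the coercivity condition to obtain uniform $W^{1,p}$ (resp. $BV$) bounds and the metrizability of the weak (resp. weak-$*$) topology on bounded sets to control the two convergences simultaneously, with part (ii) then following from part (i) by a second standard diagonalization. The only cosmetic difference is that the paper metrizes the weak topology on the unit ball of $L^p$ and then upgrades to weak $W^{1,p}$ convergence via the uniform bound, whereas you metrize the weak $W^{1,p}$ topology directly; this is immaterial.
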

\begin{proof}
i) By definition of $\mathcal F(\chi,u;\Omega)$, for each $n \in \Nb$, there exist sequences
$u_{n,k} \in W^{1,q}(\Omega;\Rb^d)$ and $\chi_{n,k} \in BV(\Omega;\{0,1\})$
such that $u_{n,k} \weak u$ in $W^{1,p}(\Omega;\Rb^d)$ as $k \to + \infty$, $\chi_{n,k} \weakstar \chi$ 
in $BV(\Omega;\{0,1\})$ as $k \to + \infty$ and
$$\liminf_{k \to + \infty}F(\chi_{n,k},u_{n,k}) < \mathcal F(\chi,u;\Omega) + \frac{1}{n}.$$
By taking subsequences if necessary, for each $n$, we may assume that
$$\liminf_{k \to + \infty}F(\chi_{n,k},u_{n,k}) = \lim_{k \to + \infty}F(\chi_{n,k},u_{n,k}).$$
Hence for each $n$, there exists $k_n^1$ such that
\begin{equation}\label{ineq1}
F(\chi_{n,k},u_{n,k}) < \mathcal F(\chi,u;\Omega) + \frac{1}{n}, \; \; \; \forall k \geq k_n^1.
\end{equation}
Since $u_{n,k} \weak u$, as $k \to + \infty$, in $L^p(\Omega;\Rb^d)$, the sequence $u_{n,k}$ is bounded in 
$L^p(\Omega;\Rb^d)$ 
so, by the metrizability of the unit ball of $L^p(\Omega;\Rb^d)$ in the weak topology, there exists a metric 
$d$ such that $d( u_{n,k},u) \to 0$ as $k \to + \infty$.
Thus, there exists $k_n^2$ such that
$$ d(u_{n,k}, u) < \frac{1}{n}, \; \; \;
\|\chi_{n,k} - \chi \|_{L^1(\Omega;\{0,1\})} < \frac{1}{n}, \; \; \; \forall k \geq k_n^2.$$
Choose an increasing sequence $k_n \in \Nb$ such that $k_n \geq \max\{k_n^1,k_n^2\}$, then
$ d(u_{n,k_n}, u) \to 0$,
so $u_{n,k_n} \weak u$ in $L^p(\Omega;\Rb^d)$, and $\chi_{n,k_n} \to \chi$ in $L^1(\Omega;\{0,1\})$, 
as $n \to + \infty$. By the coercivity condition \eqref{coer} and \eqref{ineq1} it follows that
$$\sup_{n}\left [\alpha \int_{\Omega}|\nabla u_{n,k_n}(x)|^p \, dx + |D \chi_{n,k_n}|(\Omega)\right ]
\leq \sup_{n} F(\chi_{n,k_n},u_{n,k_n};\Omega) \leq \mathcal F(\chi,u;\Omega) + 1 < + \infty$$
so $u_{n,k_n}$ is bounded in $W^{1,p}(\Omega;\Rb^d)$ and $\chi_{n,k_n}$ is bounded in 
$BV(\Omega;\{0,1\})$ and therefore these sequences are admissible for $\mathcal F(\chi,u;\Omega)$.
Hence, by \eqref{ineq1},
$$\mathcal F(\chi,u;\Omega) \leq \liminf_{n \to + \infty}F(\chi_{n,k_n},u_{n,k_n};\Omega)
\leq \limsup_{n \to + \infty}\left(\mathcal F(\chi,u;\Omega) + \frac{1}{n}\right) = \mathcal F(\chi,u;\Omega),$$
which, passing to another subsequence if necessary, proves i).

\smallskip

ii) Follows immediately from part i) and a standard diagonalization argument.

\smallskip

iii) The proof in the case $p=1$ is similar to the case $p>1$ replacing the weak topology in $W^{1,p}$
by the weak * topology in BV. 
\end{proof}

In order to prove our measure representation results 
we will need the following lemmas (see Lemmas 2.4 and 3.4 in \cite{FMy} for the case \eqref{pqn}
and Lemma 5.4 in \cite{Soneji} for the case $p=1$).

\begin{lemma}\label{Lemma2.4FMy} Let $p$ and $q$ satisfy \eqref{pqn}.
Let $V\subset\subset\Omega,~W\subset\Omega$ be open
sets such that $\Omega=V\cup W$, and let 
$v\in W^{1,q}\left(V;\mathbb{R}^{d}\right)$,
$w \in W^{1,q}\left(W;\mathbb{R}^{d}\right).$
Then, for every $m\in\mathbb{N}$, there exist 
$z\in W^{1,q}\left(\Omega;\mathbb{R}^{d}\right)$ and open sets 
$V^{\prime}\subset V$ and $W^{\prime}\subset W,$ 
such that $V^{\prime}\cup W^{\prime}=\Omega,$
$z=v$ in $\Omega\backslash W'$, $z=w$ in
$\Omega\backslash V^{\prime},$
\begin{equation}
{\cal L^N}(V' \cap W')\leq \frac{C}{m}\label{stripestimation}%
\end{equation}
and
\begin{equation}\label{fundest}
\begin{array}{ll}
\displaystyle{\left\Vert z\right\Vert _{W^{1,q}(  V'\cap W')}
\leq
\displaystyle{\frac{C}{m^{\tau}}}\left(\left\Vert v
\right\Vert _{W^{1,p}(  V\cap W)}
+\left\Vert w\right\Vert _{W^{1,p}\left(  V\cap W\right)}
+m\left\Vert w-v\right\Vert _{L^{p}(  V\cap W) }\right)},
\end{array}
\end{equation}
where $C=C\left(p,q,V,W\right)$ and $\tau=\tau\left(N,p,q\right) >0.$
\end{lemma}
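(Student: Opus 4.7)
The approach is a Fonseca--Maly style gluing. Since $\overline V \subset \Omega = V \cup W$, I first fix open sets $V_1 \subset\subset V_2 \subset\subset V$ with $\overline V \setminus V_1 \subset W$, so that the transition annulus $V_2 \setminus \overline{V_1}$ sits inside $V \cap W$. Take a smooth $\psi \colon \Omega \to [0,1]$ equal to $0$ on $V_1$ and to $1$ on $\Omega \setminus V_2$, chosen so that $|\nabla \psi|$ is bounded above and away from $0$ on the annulus, and foliate this annulus into $m$ pairwise disjoint concentric slabs
$$
S_j := \psi^{-1}\!\left(\left(\tfrac{j}{m}, \tfrac{j+1}{m}\right)\right), \qquad j = 0, \dots, m-1,
$$
each contained in $V \cap W$; by the coarea formula $\mathcal L^N(S_j) \leq C/m$.

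Setting $g := |v|^p + |\nabla v|^p + |w|^p + |\nabla w|^p + m^p |v-w|^p$ and $\Phi := \|v\|_{W^{1,p}(V\cap W)} + \|w\|_{W^{1,p}(V\cap W)} + m \|v-w\|_{L^p(V\cap W)}$, disjointness of the $S_j$ gives $\sum_j \int_{S_j} g\, dx \leq \int_{V \cap W} g\, dx \leq C \Phi^p$, so by pigeonhole there exists $j_0$ with $\int_{S_{j_0}} g\, dx \leq C \Phi^p / m$. Pick a Lipschitz cutoff $\eta$ with $\eta \equiv 1$ on $\{\psi \leq j_0/m\}$, $\eta \equiv 0$ on $\{\psi \geq (j_0+1)/m\}$, $|\nabla \eta| \leq Cm$, and set
$$
z := \eta\, v + (1-\eta)\, w.
$$
The choices of $V_1, V_2$ make $z$ well defined on all of $\Omega$: on $\{\eta > 0\} \subset \{\psi < 1\} \subset V_2 \subset V$ one needs $v$, while on $\{\eta < 1\} \subset \Omega \setminus V_1 \subset W$ one needs $w$. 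Moreover $z \in W^{1,q}(\Omega;\Rb^d)$ because $v, w \in W^{1,q}$ and $\eta$ is Lipschitz. Choosing $V' := \{\psi < (j_0+1)/m\}$ and $W' := \{\psi > j_0/m\}$, one checks directly that $V' \subset V$, $W' \subset W$, $V' \cup W' = \Omega$, $V' \cap W' = S_{j_0}$, $z = v$ on $\Omega \setminus W'$, $z = w$ on $\Omega \setminus V'$, and $\mathcal L^N(V' \cap W') \leq C/m$, i.e.\ \eqref{stripestimation}.

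The main obstacle is \eqref{fundest}. From the identity $\nabla z = \eta\nabla v + (1-\eta)\nabla w + \nabla \eta \otimes (v-w)$ on $S_{j_0}$, one has $|\nabla z| \leq |\nabla v| + |\nabla w| + Cm|v-w|$ there, so combined with the pigeonhole choice of $j_0$ one gets immediately a $W^{1,p}$ bound $\|z\|_{W^{1,p}(S_{j_0})} \leq C\, m^{-1/p}\Phi$. Upgrading this to the required $W^{1,q}$-bound on $S_{j_0}$ is the delicate point: one invokes a Sobolev / Gagliardo--Nirenberg-type inequality on the thin slab $S_{j_0}$ --- whose thickness is of order $1/m$ transverse to the level sets of $\psi$ --- together with H\"older's inequality in order to absorb a positive power of the slab's thickness. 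It is precisely the strict inequality $q < Np/(N-1)$ of \eqref{pqn} (respectively $q < +\infty$ when $N = 1$) that makes the resulting exponent $\tau = \tau(N, p, q)$ strictly positive; combining this improved inequality with the pigeonhole estimate then delivers \eqref{fundest} with $C = C(p, q, V, W)$.
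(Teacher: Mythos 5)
The paper does not prove this lemma at all: it is quoted directly from Fonseca--Mal\'y \cite{FMy} (Lemmas 2.4 and 3.4 there, and Lemma 5.4 of \cite{Soneji} for $p=1$), so your proposal has to be judged on its own merits. The set-theoretic part of your construction (the annulus, the pigeonhole over the slabs $S_j$, the choice of $V'$, $W'$, and the verification of \eqref{stripestimation}) is fine and is indeed how such gluings start. The problem is the definition $z=\eta v+(1-\eta)w$ together with the final step, and it is fatal.

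With a pointwise convex combination, $\nabla z$ contains the terms $\eta\nabla v$ and $(1-\eta)\nabla w$ on $S_{j_0}$, so any bound on $\|z\|_{W^{1,q}(S_{j_0})}$ must control $\|\nabla v\|_{L^q(S_{j_0})}$ and $\|\nabla w\|_{L^q(S_{j_0})}$. But the right-hand side of \eqref{fundest} involves only the $W^{1,p}$ norms of $v$ and $w$ (plus $m\|v-w\|_{L^p}$), and there is no ``Sobolev/Gagliardo--Nirenberg inequality on a thin slab'' that bounds the $L^q$ norm of the gradient of a general Sobolev function by its $L^p$ norm: $\nabla v$ can be a tall spike supported in a tiny subset of $S_{j_0}$, with small $L^p$ norm and arbitrarily large $L^q$ norm, and the pigeonhole (applied to $L^p$ quantities) cannot rule this out. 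This is precisely why the lemma is nontrivial. The actual proof in \cite{FMy} does not glue $v$ and $w$ pointwise on the transition strip; it first replaces them there by regularized versions obtained from a \emph{trace-preserving linear smoothing operator} (built from averages over a grid of cubes of side $\sim 1/m$), whose gradients gain integrability: the Sobolev--Poincar\'e inequality on each small cube controls the $L^{Np/(N-1)}$ norm of the oscillation by the $L^p$ norm of the gradient, and the strict inequality $q<Np/(N-1)$ then yields, via H\"older, the positive power $m^{-\tau}$. This smoothing step is the missing idea; without it the interpolation you invoke at the end does not exist, and the argument cannot produce \eqref{fundest}.
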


\begin{lemma}\label{Lemma 3.4 FMy} 
Let $V,\ W\subset\Omega$ be open sets such that
$V\subset\subset\Omega$ and $\Omega=V\cup W$. 
Let $f$ be as in \eqref{density} where $W_i$, $i = 1,2$, satisfy the growth condition \eqref{growth}.
If 
$u\in W^{1,p}\left(\Omega;\mathbb{R}^{d}\right)$ and 
$\chi\in BV\left(\Omega;\left\{0,1\right\}\right)$, then
\[
\mathcal{F}\left(\chi,u;\Omega\right)  \leq
\mathcal{F}\left(\chi,u;V\right)  + 
\mathcal{F}\left(\chi,u;W\right).  
\]
The same result holds for $\mathcal{F}_1\left(\chi,u;\Omega\right)$.
\end{lemma}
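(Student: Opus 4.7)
The plan is to run a Fonseca--Mal\'y style gluing of nearly optimal recovery sequences, combined with a slicing argument to splice the characteristic functions without generating spurious perimeter. Fix $\varepsilon>0$ and, invoking Proposition \ref{attain}, choose admissible sequences
$$v_n\in W^{1,q}(V;\mathbb R^d),\ w_n\in W^{1,q}(W;\mathbb R^d),\ \chi^V_n\in BV(V;\{0,1\}),\ \chi^W_n\in BV(W;\{0,1\}),$$
with $v_n\rightharpoonup u$ in $W^{1,p}(V)$, $w_n\rightharpoonup u$ in $W^{1,p}(W)$, $\chi^V_n\overset{*}{\rightharpoonup}\chi$, $\chi^W_n\overset{*}{\rightharpoonup}\chi$, and whose energies realise $\mathcal F(\chi,u;V)$ and $\mathcal F(\chi,u;W)$ up to $\varepsilon$. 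By Rellich--Kondrachov $v_n\to u$ in $L^p(V)$ and $w_n\to u$ in $L^p(W)$, so $\|v_n-w_n\|_{L^p(V\cap W)}\to 0$ and both sequences are bounded in $W^{1,p}$.

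For each $m\in\mathbb N$, Lemma \ref{Lemma2.4FMy} produces $z_{n,m}\in W^{1,q}(\Omega;\mathbb R^d)$ and open sets $V'_m\subset V$, $W'_m\subset W$ with $V'_m\cup W'_m=\Omega$, satisfying \eqref{stripestimation}, \eqref{fundest}, $z_{n,m}=v_n$ on $\Omega\setminus W'_m$ and $z_{n,m}=w_n$ on $\Omega\setminus V'_m$. For the characteristic function I would exploit the fact that the interface $\partial V'_m$ can be picked within a prescribed annular neighbourhood of $\partial V$: by a Fubini/coarea argument on the distance function $d(\cdot,\partial V)$,
$$\int\mathcal H^{N-1}\bigl(\partial\{d(\cdot,\partial V)>t\}\cap\{\chi^V_n\neq\chi^W_n\}\bigr)\,dt\le\|\chi^V_n-\chi^W_n\|_{L^1(V\cap W)}\to 0,$$
so the level defining $V'_m$ may be chosen (depending on $n$) to make the jump of the glued function
$$\tilde\chi_{n,m}:=\chi^V_n\,\mathbf 1_{V'_m}+\chi^W_n\,\mathbf 1_{\Omega\setminus V'_m}$$
across $\partial V'_m$ of $\mathcal H^{N-1}$-measure $o(1)$ as $n\to\infty$. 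Then $\tilde\chi_{n,m}\in BV(\Omega;\{0,1\})$, $\tilde\chi_{n,m}\to\chi$ in $L^1(\Omega)$, and
$$|D\tilde\chi_{n,m}|(\Omega)\le|D\chi^V_n|(V)+|D\chi^W_n|(W)+o(1).$$

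Splitting $\Omega=(V\setminus W'_m)\cup(\Omega\setminus V'_m)\cup(V'_m\cap W'_m)$ and using \eqref{growth} on the thin strip,
$$F(\tilde\chi_{n,m},z_{n,m};\Omega)\le F(\chi^V_n,v_n;V)+F(\chi^W_n,w_n;W)+\beta\,\mathcal L^N(V'_m\cap W'_m)+\beta\,\|z_{n,m}\|_{W^{1,q}(V'_m\cap W'_m)}^q+o(1).$$
By \eqref{stripestimation} and \eqref{fundest} the last two terms are controlled by $\beta C/m+C\,m^{-q\tau}(M+m\|v_n-w_n\|_{L^p(V\cap W)})^q$, with $M$ a uniform $W^{1,p}$ bound.

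The main obstacle is the coupled limit in $n$ and $m$: the factor $m$ multiplying $\|v_n-w_n\|_{L^p}$ in \eqref{fundest} forces a careful diagonalisation. One must pick $m=m_n\to\infty$ slowly enough that $m_n\|v_n-w_n\|_{L^p(V\cap W)}\to 0$, yet still $m_n^{-q\tau}\to 0$, so that the overlap contribution vanishes. The resulting sequence $(\tilde\chi_{n,m_n},z_{n,m_n})$ is admissible for $\mathcal F(\chi,u;\Omega)$, and passing to the liminf gives
$$\mathcal F(\chi,u;\Omega)\le\mathcal F(\chi,u;V)+\mathcal F(\chi,u;W)+2\varepsilon;$$
letting $\varepsilon\to 0$ concludes. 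The statement for $\mathcal F_1$ follows by the same argument, using the $p=1$ analogue of Lemma \ref{Lemma2.4FMy} from \cite{Soneji} and replacing the weak $W^{1,p}$ convergence of $u_n$ by its weak$^*$ convergence in $BV$.
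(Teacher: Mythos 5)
Your proposal is correct and follows essentially the same route as the paper: the Fonseca--Mal\'y gluing lemma for the Sobolev fields, a coarea/slicing argument on the distance function to splice the characteristic functions without creating extra perimeter, the growth bound \eqref{growth} on the overlap strip, and a diagonalisation coupling $m$ and $n$. The only points where the paper is more careful are that it first passes to subsets $V'\subset V$, $W'\subset W$ with $C^1$ boundaries and $\overline{V'\cap W'}\subset V\cap W$, so that Rellich's theorem legitimately yields the strong $L^p$ convergence of $v_n-w_n$ on the relevant overlap (compactness need not hold on the arbitrary open set $V$ itself), and that it keeps the transition surface for the characteristic functions independent of the sets produced by Lemma \ref{Lemma2.4FMy}, which is harmless since the bulk density is bounded above on the strip regardless of the value of the glued characteristic function.
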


\begin{proof}

Choose $\varepsilon>0$ and let $V^{\prime}\subset V$,
$W^{\prime}\subset W$ be such that $\Omega=V^{\prime}\cup W^{\prime}$,
$\overline {V^{\prime}\cap W^{\prime}}\subset V\cap W$ and $V'$ and $W'$ have $C^1$ boundaries
so that we can apply Rellich's compact embedding theorem. Thus, by this result and the 
definition of the relaxed functionals $\mathcal{F}\left(\chi,u;V\right)$ and $\mathcal{F}\left(\chi,u;W\right)$, 
there exist 
$v_{n}\in W^{1,q}\left(V;\mathbb{R}^{d}\right)$,
$w_{n} \in W^{1,q}\left(W;\mathbb{R}^{d}\right)$,
$\chi_{n} \in BV\left(V;\left\{0,1\right\}\right)$ and  
$\zeta_{n} \in BV\left(W;\left\{0,1\right\}\right)$ 
such that
\[
\begin{array}[c]{lll}
v_{n}\rightharpoonup u\text{ in }W^{1,p}\left(V;\mathbb{R}^{d}\right),  &  &
w_{n}\rightharpoonup u\text{ in }W^{1,p}\left(W;\mathbb{R}^{d}\right),\\
\chi_{n}\overset{\ast}{\rightharpoonup}\chi\text{ in }
BV\left(V;\left\{0,1\right\}\right), &  & 
\zeta_{n}\overset{\ast}{\rightharpoonup}\chi\text{ in }
BV\left(W;\left\{0,1\right\}\right),
\end{array}
\]
$$
\left\Vert v_{n}-u\right\Vert _{L^{p}\left(V^{\prime}\cap W^{\prime}\right)}
\leq\frac{1}{n},
~\left\Vert w_{n}-u\right\Vert _{L^{p}\left(V^{\prime}\cap W^{\prime}\right)}
\leq\frac{1}{n},
$$
$$
\left\Vert \chi_{n}-\chi\right\Vert_{L^{1}\left(V^{\prime}\cap 
W^{\prime}\right)} \leq\frac{1}{n},
~\left\Vert\zeta_{n}-\chi\right\Vert_{L^{1}\left(V^{\prime}\cap 
W^{\prime}\right)}
\leq\frac{1}{n},
$$
and
\begin{align*}
\int_{V}f\left(\chi_{n}(x),\nabla v_{n}(x)\right) \, dx +
\left\vert D\chi_{n}\right\vert \left(V\right)   &  
\leq\mathcal{F}\left(\chi,u;V\right) + \varepsilon,\\
\int_{W}f\left(\zeta_{n}(x),\nabla w_{n}(x)\right) \, dx +
\left\vert D\zeta_{n}\right\vert \left(W\right)   &  
\leq\mathcal{F}\left(\chi,u;W\right) + \varepsilon.
\end{align*}
By virtue of Lemma \ref{Lemma2.4FMy}, there exist open sets 
$V_{n}\subset V^{\prime}$, $W_{n}\subset W^{\prime}$, 
and functions 
$z_{n}\in W^{1,q}\left(\Omega;\mathbb{R}^{d}\right)$, such that 
$V_{n}\cup W_{n}=\Omega,$ 
\begin{equation}\label{vn}
z_{n}  = v_{n} 
\text{ in }\Omega\backslash W_{n},\;\;\;
z_{n}  = w_{n} 
\text{ in }\Omega\backslash V_{n},
\end{equation}
and estimates of the type \eqref{stripestimation} and \eqref{fundest} hold. 

The sequence $z_n$ is admissible for ${\mathcal F}(\chi,u;\Omega)$
since $z_{n}\rightharpoonup u$ weakly in
$W^{1,p}\left(\Omega;\mathbb{R}^{d}\right)$. 
Indeed, by \eqref{stripestimation} and \eqref{vn}, $z_n$ converges to $u$ in measure, and by
\eqref{fundest}, 
$z_{n}$ is bounded in
$W^{1,p}\left(\Omega;\mathbb{R}^{d}\right)$, so it follows that, for a subsequence
which we do not relabel,
$z_{n} \rightharpoonup u$ in $W^{1,p}(\Omega;\mathbb R^d)$. 

We must now build a transition sequence $\eta_n$ between $\chi_n$ and 
$\zeta_n$ in the above sets $V_n$ and $W_n$, in such a way that an upper bound 
for the total variation of $\eta_n$ in $V_n\cap W_n$ is obtained.
In order to connect these functions without adding more interfaces, we argue 
as in \cite{BMMO} (see also \cite{CZ}). 
For $\delta > 0$ small enough, consider
$$
V'_\delta:=\{x \in W'\cap V': {\rm dist}(x, \partial \overline{V'}) < \delta\}.
$$
Given $x \in W$, let $d(x) := {\rm dist}(x;V')$. Since the distance function 
to a fixed set is Lipschitz continuous (see Exercise 1.1 in Ziemer \cite{Z}  and \cite{GP})
we can apply the change of variables formula (see Theorem 2, Section 3.4.3, 
in Evans and Gariepy \cite{EG}), to obtain
$$
\int_{V'_\delta \setminus \overline{V}}|\chi_n(x)-\zeta_n(x)| 
|{\rm det}\nabla d(x)| \, dx =
\int_0^\delta \left[ \int_{d^{-1}(y)} |\chi_n(x)-\zeta_n(x)| \,
d {\mathcal H}^{N-1}(x) \right] dy$$
and, as $|{\rm det}\nabla d(x)|$ 
is bounded and $\chi_n - \zeta_n \to 0$  in $L^1(W' \cap V';\mathbb R^d)$, it follows that, for almost every $\rho \in [0; \delta]$,
\begin{equation}\label{413bis}
\lim_{n\to +\infty}\int_{d^{-1}(\rho)}|\chi_n(x) - \zeta_n(x)| \, 
d{\mathcal H}^{N-1}(x) = 
\lim_{n\to +\infty}\int_{\partial V'_\rho}|\chi_n(x)-\zeta_n(x)| \,
d {\mathcal H}^{N-1}(x) = 0.
\end{equation}
Fix $\rho_0\in [0; \delta]$ such that \eqref{413bis} holds. We
observe that $V'_{\rho_0}$ is a set with locally Lipschitz boundary since 
it is a level set of a Lipschitz function (see, for example, Evans 
and Gariepy \cite{EG}). Hence, for every $n$ and $V_n\subset V'$ and $W_n\subset W'$, 
we can consider $\chi_n, \zeta_n$ on $\partial V_{n,\rho_n}$ in
the sense of traces and define
\begin{equation*}
\eta_n =\left\{\begin{array}{ll}
\chi_n &\hbox{ in } \overline{V}_{n,\rho_n},\\
\zeta_n &\hbox{ in } W_n\setminus \overline{V}_{n,\rho_n},
\end{array}\right.
\end{equation*}
and 
$$\int_{\partial V_{n,\rho_n}}|\chi_n(x)-\zeta_n(x)| \, d{\cal H}^{N-1}(x)
\leq \frac{C}{n},
$$
for a suitable constant $C$.

By the choice of $\rho_0$, $\eta_n$ is admissible for 
${\mathcal F}(\chi,u;\Omega)$.
In particular, $\eta_n \wsto \chi$ in $BV(\Omega;\{0,1\}).$
Using \eqref{growth},  \eqref{stripestimation} and \eqref{fundest} one has
\[
\int_{V_{n}\cap W_{n}}f\left(\eta_{n}(x),\nabla z_{n}(x)\right) \, dx 
\leq\beta\int_{V_{n}\cap W_{n}}
\left(1+\left\vert \nabla z_{n}(x)\right\vert ^{q}\right) \, dx 
\leq\frac{C}{n} + \frac{C}{n^{q\tau}},
\]
where $\tau$ is as in Lemma \ref{Lemma2.4FMy}. It follows that
\begin{align*}
\int_{\Omega}f\left(\eta_{n}(x),\nabla z_{n}(x)\right) \, dx
+\left\vert D\eta_{n}\right\vert \left(\Omega\right) & 
\leq\int_{V}f\left(\chi_{n}(x),\nabla v_{n}(x)\right) \, dx 
+\left\vert D\chi_{n}\right\vert \left(V\right)  \\
& +\int_{W}f\left(\zeta_{n}(x),\nabla w_{n}(x)\right) \, dx 
 +\left\vert D\zeta_{n}\right\vert \left(W\right) 
+ C(n^{-1}+ n^{-q\tau}+n^{-1}).
\end{align*}
Taking the limit in $n$ one obtains
\[
\mathcal{F}\left(\chi,u;\Omega\right)  \leq 
\mathcal{F}\left(\chi,u;V\right) + \mathcal{F}\left(\chi,u;W\right)  
+2\varepsilon,
\]
so, letting $\varepsilon\rightarrow 0^+$, we deduce the desired subadditivity inequality. 

Notice that in the case $p=1$ there is no need to consider the auxiliary sets $V'$ and $W'$ since,
by definition of $\mathcal{F}_1$, we have strong convergence in $L^1(\Omega;\Rb^d)$ of $v_n$ and $w_n$ to $u$. In
this case, to show admissibility of $z_n$ for $\mathcal{F}_1\left(\chi,u;\Omega\right)$ we conclude
first that $z_n$ is bounded in $W^{1,1}(\Omega;\Rb^d)$ and then use the embedding theorem to obtain 
$z_n \to u$ in $L^1(\Omega;\Rb^d)$.
\end{proof}

\begin{remark}{\rm
A similar result holds for ${\cal F}_{\rm loc}(\chi,u;\cdot)$ and ${\cal F}_{1,{\rm loc}}(\chi,u;\cdot)$, with an 
analogous proof. Indeed the proof relies on Lemma \ref{Lemma2.4FMy}, which can 
still be applied for ${\cal F}_{\rm loc}$ and ${\cal F}_{1,{\rm loc}}$, and a careful glueing argument for 
characteristic functions as was seen above.}
\end{remark}

\medskip

\section{Main Results}

Assume that $f$ is defined by \eqref{density} with $W_i$, $i=1,2$, 
satisfying \eqref{growth}. 
For every open set $A\subset \Omega\subset \subset \mathbb R^N$ and 
every 
$(\chi,u) \in BV(A;\{0,1\})\times W^{1,p}(A;\mathbb R^d)$ let $F$ be as 
in \eqref{FchiuA} and consider the relaxed functional 
${\cal F}(\chi,u;A)$ in 
\eqref{introrelaxed}. Then the following weak representation result holds. 

\begin{theorem}\label{1<p<q}
Let $f$ be given by $\left(  \ref{density}\right)$ and satisfy 
$\left(\ref{growth}\right)$, let $p,~q$ be as in (\ref{pqn})
and let $\chi\in BV\left(  \Omega;\left\{  0,1\right\}\right)$
and $u\in W^{1,p}\left(\Omega;\mathbb{R}^{d}\right).$ If 
$\mathcal{F}\left(\chi,u;\Omega\right)  < +\infty$, then there exists a 
non-negative Radon measure $\mu$ on $\overline{\Omega}$ which weakly 
represents $\mathcal{F}\left(\chi,u;\cdot\right).$ Likewise for $\mathcal{F}_1\left(\chi,u;\cdot\right).$
\end{theorem}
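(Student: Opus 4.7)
The plan is to derive $\mu$ via the De Giorgi--Letta criterion applied to an inner-regularized version of the set function $A\mapsto \mathcal{F}(\chi,u;A)$, viewed on the open subsets of $\overline\Omega$. Fix $(\chi,u)$ with $\mathcal{F}(\chi,u;\Omega)<+\infty$. The reason why only a weak representation is to be expected is already encoded in Lemma~\ref{Lemma 3.4 FMy}: subadditivity of $\mathcal{F}(\chi,u;\cdot)$ is available only when one of the open pieces is compactly contained in the union, which is the main obstacle throughout and is what forces the inner regularization below.

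First I would record three elementary properties of $\mathcal{F}(\chi,u;\cdot)$. Monotonicity in the set argument is immediate by restriction of admissible sequences. Superadditivity on disjoint open sets, namely
$$
\mathcal{F}(\chi,u;A)+\mathcal{F}(\chi,u;B)\leq \mathcal{F}(\chi,u;A\cup B)\ \text{ whenever }A\cap B=\emptyset,
$$
follows from the additivity of both $\int f(\chi_n,\nabla u_n)\,dx$ and $|D\chi_n|$ on disjoint open sets, combined with the elementary inequality $\liminf a_n+\liminf b_n\leq \liminf(a_n+b_n)$. The subadditivity of Lemma~\ref{Lemma 3.4 FMy} extends verbatim to any open $U\subset\Omega$ in place of $\Omega$, since its proof is local and relies only on Lemma~\ref{Lemma2.4FMy} together with the interface-gluing for characteristic functions.

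I then define, for every open subset $U$ of $\overline\Omega$,
$$
\widetilde\lambda(U):=\sup\bigl\{\mathcal{F}(\chi,u;V):V\in\mathcal O(\Omega),\ \overline V\subset U\bigr\},
$$
which is monotone, inner regular by construction, and satisfies $\widetilde\lambda(\overline\Omega)\leq \mathcal{F}(\chi,u;\Omega)<+\infty$. Superadditivity on disjoint open sets of $\overline\Omega$ is inherited from that of $\mathcal{F}$. For subadditivity, given open $U_1,U_2\subset \overline\Omega$ and $V\in\mathcal O(\Omega)$ with $\overline V\subset U_1\cup U_2$, a classical nested-domain construction (separating the compact sets $\overline V\setminus U_2\subset U_1$ and $\overline V\setminus U_1\subset U_2$ by Urysohn-type thickenings) produces open sets $V_1,V_2\subset\Omega$ with $\overline{V_i}\subset U_i$, $V\subset V_1\cup V_2$, and $\overline{V_1}\subset V_1\cup V_2$. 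Applying Lemma~\ref{Lemma 3.4 FMy} inside the open set $V_1\cup V_2$, with $V_1$ as the compactly contained piece, then yields
$$
\mathcal{F}(\chi,u;V)\leq \mathcal{F}(\chi,u;V_1\cup V_2)\leq \mathcal{F}(\chi,u;V_1)+\mathcal{F}(\chi,u;V_2)\leq \widetilde\lambda(U_1)+\widetilde\lambda(U_2),
$$
and taking the supremum over $V$ gives subadditivity of $\widetilde\lambda$. The De Giorgi--Letta theorem then identifies $\widetilde\lambda$ with the trace on the open subsets of $\overline\Omega$ of a finite non-negative Radon measure $\mu$.

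It remains to verify the weak representation. For any open $A\subset\Omega$, the choice $V=A$ in the definition of $\widetilde\lambda$ shows $\widetilde\lambda(U)\geq \mathcal{F}(\chi,u;A)$ for every open $U\subset \overline\Omega$ containing $\overline A$; by outer regularity of $\mu$ on the compact set $\overline A$ this yields $\mu(\overline A)\geq \mathcal{F}(\chi,u;A)$. The converse inequality $\mu(A)=\widetilde\lambda(A)\leq \mathcal{F}(\chi,u;A)$ is immediate from monotonicity of $\mathcal{F}(\chi,u;\cdot)$. The statement for $\mathcal{F}_1$ requires no essential change, since Lemma~\ref{Lemma 3.4 FMy} has its $p=1$ counterpart and the monotonicity and superadditivity arguments are insensitive to whether the convergence of $u_n$ is $W^{1,p}$-weak or $BV$-weak$\ast$.
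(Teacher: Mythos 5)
Your overall strategy (inner regularization of the set function followed by the De Giorgi--Letta criterion) is genuinely different from the paper's proof, which instead extracts $\mu$ as a weak $\ast$ limit of the energy measures $f(\chi_n,\nabla u_n)\,{\cal L}^N+|D\chi_n|$ along a realizing sequence -- first under the additional coercivity assumption \eqref{coer}, which guarantees via Proposition \ref{attain} that such a sequence exists, and then removing coercivity by perturbing with $\e\int_\Omega|\nabla u|^p\,dx$. Your route would be more economical if it worked, but it has a genuine gap at the subadditivity step, and the gap sits exactly where the difficulty of the problem lives.

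Concretely, you claim a nested-domain construction producing open sets $V_1,V_2\subset\Omega$ with $\overline{V_i}\subset U_i$, $V\subset V_1\cup V_2$ and $\overline{V_1}\subset V_1\cup V_2$, so that Lemma \ref{Lemma 3.4 FMy} can be applied in $V_1\cup V_2$ with $V_1$ as the compactly contained piece. Such sets need not exist. Since $V_1\cup V_2\subset\Omega$, the requirement $\overline{V_1}\subset V_1\cup V_2$ forces $\overline{V_1}\cap\partial\Omega=\emptyset$. But if there is a point $x_0\in\overline V\cap\partial\Omega$ with $x_0\notin U_2$, then $x_0\notin\overline{V_2}$ (because $\overline{V_2}\subset U_2$), so an entire neighbourhood $N$ of $x_0$ misses $V_2$; from $V\subset V_1\cup V_2$ we get $V\cap N\subset V_1$, hence $x_0\in\overline{V_1}\cap\partial\Omega$, a contradiction. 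Exchanging the roles of $V_1$ and $V_2$ fails for the same reason as soon as $\overline V\cap\partial\Omega$ also contains a point outside $U_1$. A configuration blocking both options is $\Omega=(0,1)^N$, $U_1=\overline\Omega\cap\{x_1<2/3\}$, $U_2=\overline\Omega\cap\{x_1>1/3\}$, $V=\Omega$; yet De Giorgi--Letta requires subadditivity for all pairs of open sets, and pairs reaching $\partial\Omega$ are precisely those needed to control $\mu(\overline\Omega)$ and to obtain $\mathcal F(\chi,u;A)\leq\mu(\overline A)$ when $\overline A$ meets $\partial\Omega$. The obstruction is not cosmetic: the gluing of Lemmas \ref{Lemma2.4FMy} and \ref{Lemma 3.4 FMy} is only available when one piece is compactly contained (in the sense of $\mathbb R^N$) in the union, and the paper's argument is organized so that the lemma is invoked exactly once, in the single admissible configuration $\Omega=V\cup(\Omega\setminus\overline Z)$ with $V\subset\subset\Omega$ and $\overline Z\subset V$. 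To rescue your approach you would need a fundamental estimate valid up to $\partial\Omega$, i.e.\ a transition layer allowed to touch the boundary of $\Omega$; that is not provided by the paper and does not follow ``verbatim'' from Lemma \ref{Lemma2.4FMy}.
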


\begin{proof}
{\em Step 1.} We assume first that the coercivity condition \eqref{coer} holds.
Thus, by Proposition \ref{attain}, let 
$(\chi_n,u_n)\in BV(\Omega;\{0,1\}) \times 
W^{1,q}(\Omega;\mathbb R^d)$  
be a realizing sequence for 
${\cal F}(\chi,u;\Omega)$, that is,
$u_n \rightharpoonup u$ in $W^{1,p}(\Omega;\mathbb R^d)$,
$ \chi_n \overset{\ast}{\rightharpoonup} \chi$ in $BV(\Omega;\{0,1\})$ and
\begin{equation}\label{choice}
\displaystyle{\lim_{n \to +\infty} F(\chi_n,u_n;\Omega)
= {\cal F}(\chi,u;\Omega)}.
\end{equation}
Let $\mu_n = f(\chi_n(\cdot), \nabla u_n (\cdot)) {\cal L}^N\lfloor{\Omega}
+ |D \chi_n|(\Omega)$ and extend this sequence of measures outside of 
$\Omega$ by setting, for any Borel set $E \subset \mathbb R^N$,
$$\lambda_n(E) = \mu_n(E \cap \Omega).$$ 
Passing, if necessary, to a subsequence, we can assume that there exists a 
non-negative Radon measure $\mu$ (depending on $\chi$ and $u$) on 
$\overline{\Omega}$  such that $\lambda_n \wsto \mu$ in the sense of measures 
in $\overline {\Omega}$.
Let $\phi_k \in C_0(\overline{\Omega})$ be an increasing sequence of functions 
such that $0 \leq \phi_k \leq 1$ and $\phi_k(x) \to 1$ a.e. in 
$\overline{\Omega}$. Then, by Fatou's Lemma and \eqref{choice}, we have
\begin{align*}
\mu(\overline{\Omega}) = \int_{\overline{\Omega}}
\liminf_{k \to + \infty} \phi_k(x) \, d\mu
& \leq \liminf_{k \to +\infty}\int_{\overline{\Omega}}\phi_k(x) \, d\mu \\
& = \liminf_{k \to +\infty}\lim_{n \to + \infty}\left ( \int_{\Omega}
\phi_k(x) f(\chi_n(x),\nabla u_n (x)) \, dx + 
\int_{\Omega}\phi_k(x) \, d |D\chi_n| \right )\\
& \leq \lim_{n \to + \infty} \left (\int_{\Omega}
f(\chi_n(x),\nabla u_n (x)) \, dx + |D\chi_n|(\Omega) \right )
=  {\cal F}(\chi,u;\Omega),
\end{align*}
so that
\begin{equation}\label{3.5Fmy}
 \mu(\overline{\Omega})\leq {\cal F}(\chi, u; \Omega).
\end{equation}
On the other hand, by the upper semicontinuity of weak $\ast$ convergence of 
measures on compact sets, 
for every open set $V \subset \Omega$, it follows that
\begin{equation}\label{3.6FMy}
{\cal F}(\chi, u; V) \leq \liminf_{n \to +\infty} F(\chi_n, u_n; V) 
= \liminf_{n \to +\infty} \mu_n(V) 
\leq \limsup_{n \to +\infty} \mu_n(\overline{V})
\leq \mu(\overline{V}),
\end{equation}
which proves the upper bound inequality.
To prove the lower bound inequality, we start by considering an open set  
$V \subset \subset \Omega$ and $\e>0$. Then we can consider an open set 
$Z \subset \subset V$ such that
$$
\mu(V)- \mu(Z)< \e. 
$$
By \eqref{3.5Fmy}, \eqref{3.6FMy} and Lemma \ref{Lemma 3.4 FMy} we have
\begin{equation*}\label{last}
\mu(V)\leq\mu(Z)+\varepsilon 
=\mu({\overline \Omega})-  \mu( {\overline\Omega} \setminus Z) 
+ \varepsilon 
\leq {\cal F}(\chi, u; \Omega)- 
{\cal F}(\chi, u; \Omega \setminus {\overline Z})+ \varepsilon 
\leq {\cal F}(\chi, u; V)+\varepsilon.
\end{equation*}
Letting $\e \to 0^+$, we obtain 
$$
\mu(V)\leq {\cal F}(\chi, u; V),
$$
whenever $V$ is an open set such that $V \subset \subset \Omega$.
For a general open subset $V \subset \Omega$ we have
$$
\mu(V) = \sup \{\mu(O) : O \subset \subset V \}
\leq \sup \{{\cal F}(\chi, u; O) : O \subset \subset V \}
\leq {\cal F}(\chi, u; V),
$$ 
and this concludes the proof of the theorem under the coercivity assumption \eqref{coer}.

{\em Step 2.} We now remove the coercivity requirement. For each $\e >0$ we define 
$$
F_\e(\chi,u;\Omega):= F(\chi, u; \Omega) + \e \int_{\Omega}|\nabla u(x)|^p \, dx
$$
and we let ${\cal F}_\e(\chi, u; \Omega)$ denote its relaxed functional given by
\begin{eqnarray*}
\mathcal{F}_{\e}\left(  \chi,u;\Omega\right)   &  :=\inf\left\{  
\underset{n\rightarrow +\infty}{\lim\inf}
F_\e\left(\chi_{n},u_{n};\Omega\right)  :  u_{n}
\in W^{1,q}\left(\Omega;\mathbb{R}^{d}\right),
\chi_{n}
\in BV\left(\Omega;\left\{  0,1\right\}  \right), \right. \\
&  \hspace{3cm} \left.  u_{n}\rightharpoonup u\text{ in }
W^{1,p}\left(\Omega;\mathbb{R}^{d}\right),
\chi_{n}\overset{\ast}{\rightharpoonup}\chi\text{ in }
BV\left(\Omega;\left\{0,1\right\}  \right)  \right\}.
\end{eqnarray*}
By the previous step we know that there exists
a measure $\mu_\e$ which weakly represents ${\cal F}_\e$.
So, by \eqref{3.5Fmy}, we have
$$
\mu_\e(\overline\Omega)\leq {\cal F}_\e(\chi, u;\Omega)
\leq {\cal F}(\chi,u;\Omega) + \e \sup_n \|u_n\|_{W^{1,p}(\Omega;\Rb^d)} \leq C,
$$ 
where $u_n \in W^{1,q}(\Omega;\Rb^d)$ is an admissible sequence for 
$\mathcal{F}_{\e}\left(  \chi,u;\Omega\right)$.
Thus, up to a subsequence, which we do not relabel, $\mu_{\e}$ converges weakly $\ast$ to a 
finite, non-negative, Radon measure $\mu$. Given an open set $U\subset \O$, by \eqref{3.6FMy} it
follows that
$$
{\cal F}(\chi, u;U)\leq {\cal F}_\e(\chi, u;U) \leq \mu_\e({\overline U}),
$$
which, passing to the weak $\ast$ limit, yields 
$$
{\cal F}(\chi, u;U)\leq \mu({\overline U}).
$$
In order to prove the reverse inequality, let $\e'>0$, and, by definition of 
${\cal F}$, choose $u_n\in W^{1,q}(\Omega;\mathbb R^d)$ 
and $\chi_n\in BV(\Omega; \{0,1\})$, such that 
$u_n \rightharpoonup u$ in $W^{1,p}(\Omega;\mathbb R^d)$, 
$\chi_n \overset{\ast}{\rightharpoonup} \chi$ and
$$
\int_U f(\chi_n(x), \nabla u_n(x)) \, dx + |D \chi_n|(U)\leq {\cal F}(\chi,u; U)+ \e',
$$
for all $n \in \Nb$.
Then, for a sufficiently large $k$, we have
$$
\int_U \big(f(\chi_n(x), \nabla u_n(x)) + 
\e_k |\nabla u_n(x)|^p\big) \, dx + |D \chi_n|(U) 
\leq {\cal F}(\chi,u; U)+ 2\e',
$$
and hence 
$$
\mu_{\e_k}(U) \leq {\cal F}_{\e_k}(\chi,u; U) 
\leq {\cal F}(\chi,u; U)+ 2  \e'.
$$
Thus the result is proved by passing first to the weak $\ast$ limit 
as $\e_k \to 0^+$ and then to the limit as $\e' \to 0^+$.

The proof of the weak representation for $\mathcal{F}_1(\chi,u;\cdot)$ is analogous, replacing
the weak topology of $W^{1,p}(\Omega;\Rb^d)$ by the $BV(\Omega;\Rb^d)$ weak * topology for the 
convergence of the sequence $u_n$.
\end{proof}

\begin{remark}\label{measure representation}
{\rm Let $f$ be given by \eqref{density} satisfying \eqref{growth}, 
let $p,~q$
be as in (\ref{pqn}) and let 
$\chi \in BV\left(\Omega;\left\{0,1\right\}\right)$
and $u\in W^{1,p}\left(\Omega;\mathbb{R}^{d}\right)$. 
Let $\mathcal{F}$ be 
as in \eqref{introrelaxed} and $\mu$ be a Radon measure on 
$\overline{\Omega}$ which 
weakly represents $\mathcal{F} \left(\chi,u;\cdot\right)$. 
Arguing as in \cite[Lemma 3.6, Corollary 3.7 and Remark 3.8]{FMy}, 
and using Lemma \ref{Lemma 3.4 FMy}, the following facts hold. 
\begin{itemize}
\item[i)]  For every $U$ open subset of $\Omega$,  
$$ \mu(U)={\cal F}(\chi,u;U),
$$ 
provided that
\begin{equation}\label{3.7FMy}
\inf_{K}\{{\cal F}(\chi, u; U\setminus K): K \subset U, 
K\hbox{ compact}\}=0.
\end{equation}
Likewise for ${\cal F}_{\rm loc}$, ${\cal F}_1$ and ${\cal F}_{1,{\rm loc}}$.
\item[ii)] $\mu$ represents ${\cal F}$ if and only if there exists a 
Radon measure $\nu$ such that
\begin{equation}
\label{1.10Fmy}
{\cal F}(\chi, u; U)\leq \nu(U),
\end{equation}
for every open subset $U$ of $\Omega$. Likewise for 
${\cal F}_{\rm loc}$, ${\cal F}_1$ and ${\cal F}_{1,{\rm loc}}$.

In particular, if $\chi \in BV(\Omega;\{0,1\})$ and 
$u \in W^{1,q}(\Omega;\mathbb R^d)$, then
we can consider 
$$\nu(U):=\int_U Qf(\chi(x),\nabla u(x)) \, dx + |D \chi|(U),$$
where $Qf$ denotes the usual quasiconvex envelope of $f$ 
in the last variable.   
Following \cite[Corollary 4.5]{FMy} and exploiting standard results 
about quasiconvex envelopes (cf. \cite{D}, \cite[Theorem 8.4.1]{FL2}), we have that 
\begin{equation}\label{upperbdq}
{\cal F}(\chi, u; \Omega) \leq \int_\Omega Qf(\chi(x), \nabla u(x)) \, dx
+ |D \chi|(\Omega).
\end{equation}
Indeed, it suffices to fix $\chi \in BV(\Omega;\{0,1\})$ and consider 
$u_n\in W^{1,q}(\Omega;\mathbb R^d)$ such that $u_n \weak u$ in 
$W^{1,q}(\Omega;\mathbb R^d)$ and
$$
\lim_{n \to +\infty} \int_{\Omega}f(\chi(x), \nabla u_n(x)) \, dx
= \int_{\Omega}Qf(\chi(x),\nabla u(x)) \, dx.
$$
Inequality \eqref{upperbdq} also holds for ${\cal F}_1(\chi, u; \Omega)$.
\end{itemize}
}
\end{remark}

\begin{theorem}\label{measureFloc}
Let $p, q$ satisfy \eqref{pqn}, let $f$ be defined as in 
\eqref{density}, satisfying \eqref{growth}. Let 
$u \in W^{1,p}(\Omega;\mathbb R^d)$, then there exists a non-negative 
finite Radon measure $\lambda$ on $\Omega$ which strongly represents 
${\cal F}_{\rm loc}(\chi,u;\cdot)$.  The same holds in the case $p=1$,
$1 \leq q < \frac{N}{N-1}$ for the functional ${\cal F}_{1,{\rm loc}}(\chi,u;\cdot)$.

Moreover, if
	\begin{equation}
	\label{fconvex}
	f(b,\cdot) \hbox{ is convex for every }b \in \{0,1\}, 
	\end{equation}
	then, for every open subset $U \subset \Omega$, and every
	 $\chi \in BV\left(\Omega;\left\{0,1\right\}\right)$
	and $u \in W^{1,p}\left(\Omega;\mathbb{R}^{d}\right)$,
	\begin{equation}
	\nonumber
	{\cal F}_{\rm loc}(\chi, u;U)= \int_{U}f(\chi(x),\nabla u(x)) \, dx 
	+ |D \chi|(U) + \nu^s(\chi,u;U),
	\end{equation}
	where $\nu^s$ is a non-negative Radon measure singular with respect to the Lebesgue measure.

\end{theorem}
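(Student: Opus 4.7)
My plan is to (i) obtain the strong measure representation for $\mathcal{F}_{\rm loc}$ in the general case by combining the weak-representation construction of Theorem \ref{1<p<q} with an inner regularity argument tailored to the \emph{local} character of admissibility, and (ii) under the convexity assumption, invoke Ioffe's theorem for the lower bound on $\mathcal{F}_{\rm loc}$ and identify the absolutely continuous part of the representing measure by a blow-up at Lebesgue points.

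For the strong representation, the attainment argument of Proposition \ref{attain}, together with the perturbation trick of Step~2 in the proof of Theorem \ref{1<p<q}, carries over verbatim to $\mathcal{F}_{\rm loc}$ and yields a realizing sequence $(\chi_n, u_n)$ with $u_n \in W^{1,q}_{\rm loc}$. The weak-$\ast$ limit $\lambda$ of the associated energy measures $\lambda_n$ on $\overline{\Omega}$ satisfies the weak bound $\lambda(U) \leq \mathcal{F}_{\rm loc}(\chi, u; U) \leq \lambda(\overline{U})$ by the same reasoning used in Theorem \ref{1<p<q}, because the subadditivity Lemma \ref{Lemma 3.4 FMy} extends to the local version (as stated in the remark following its proof). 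To upgrade to strong representation, by Remark \ref{measure representation}(i) it suffices to verify the inner-regularity condition \eqref{3.7FMy}. This is where local admissibility is essential: on a thin shell $U \setminus K$ one can produce an admissible competitor by piecing together on interior compact subsets smooth approximations of $u$ (admissible because they are only required to lie in $W^{1,q}$ on compact subsets), and the resulting energy contribution is controlled by $\lambda(U \setminus K) \to 0$ as $K \uparrow U$. The analogous argument handles $\mathcal{F}_{1,\rm loc}$, with weak-$\ast$ convergence in $BV$ replacing weak convergence in $W^{1,p}$.

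Under the convexity hypothesis \eqref{fconvex}, Ioffe's Theorem \ref{thm2.4ABFvariant} applied with $g(b,\xi) := b W_1(\xi) + (1-b) W_2(\xi)$, combined with the standard lower semicontinuity of the total variation (note that weak convergence in $W^{1,p}$ implies weak convergence in $W^{1,1}$, and $\chi_n \to \chi$ strongly in $L^1$), yields the measure inequality
\[
f(\chi(\cdot),\nabla u(\cdot))\,\mathcal{L}^N + |D\chi| \;\leq\; \lambda \;=\; \mathcal{F}_{\rm loc}(\chi,u;\cdot)
\]
on open subsets of $\Omega$. Consequently $\nu^s := \lambda - f(\chi,\nabla u)\mathcal{L}^N - |D\chi|$ is a non-negative Radon measure. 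Since $\chi$ takes values in $\{0,1\}$, $|D\chi|$ is concentrated on an $\mathcal{H}^{N-1}$-rectifiable set and is therefore singular with respect to $\mathcal{L}^N$, so the singularity of $\nu^s$ reduces to showing that the Radon-Nikodym density $\lambda^a$ of $\lambda$ equals $f(\chi(\cdot),\nabla u(\cdot))$ almost everywhere.

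This last identification is the main obstacle. The lower bound $\lambda^a(x_0) \geq f(\chi(x_0),\nabla u(x_0))$ at a.e.\ $x_0$ is immediate by averaging the measure inequality above over small balls and applying Lebesgue differentiation. The reverse inequality is subtle because the upper bound \eqref{upperbdq}, which under \eqref{fconvex} reads $\mathcal{F}_{\rm loc}(\chi,u;U)\leq \int_U f(\chi,\nabla u)\,dx + |D\chi|(U)$, is only available when $u \in W^{1,q}$. For generic $u \in W^{1,p}$ I would argue by blow-up: at a Lebesgue point $x_0$ of $\nabla u$ and of $\chi$ with $x_0 \notin \mathrm{supp}\,|D\chi|$, rescale $\lambda$ on $B(x_0,r)$, take the affine competitor $x \mapsto u(x_0)+\nabla u(x_0)(x-x_0)$ on a slightly smaller ball, and glue it to $u$ on the surrounding annulus via Lemma \ref{Lemma2.4FMy}. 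The local admissibility of $\mathcal{F}_{\rm loc}$ is crucial here, since the glued competitor need only be $W^{1,q}_{\rm loc}$. Controlling the matching-layer error produced by \eqref{stripestimation}--\eqref{fundest} in the limit $r\to 0^+$ is where the restriction $q < Np/(N-1)$ in \eqref{pqn} enters, and is the technically delicate step.
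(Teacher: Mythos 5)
Your Part 1 (strong representation of $\mathcal{F}_{\rm loc}$) follows the same route as the paper: pass from the weak representation of Theorem \ref{1<p<q} to a strong one by verifying the inner-regularity condition \eqref{3.7FMy} and invoking Remark \ref{measure representation}(i). However, where the paper carries out an explicit telescoping construction---choosing a smooth exhaustion $U_h\subset\subset U$, selecting near-optimal sequences on each annulus $U_h\setminus\overline{U}_{h-2}$, gluing them across overlaps via Lemma \ref{Lemma2.4FMy}, splicing the characteristic functions along good slices of the distance function, and summing the resulting estimates to obtain $\mathcal{F}_{\rm loc}(\chi,u;U\setminus\overline{U}_k)\leq 6\lambda(U\setminus U_{k-1}) + C2^{-k(1+q\tau)}$---you only assert that on a thin shell ``one can produce an admissible competitor'' whose energy is ``controlled by $\lambda(U\setminus K)$''. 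That is the conclusion, not the argument; the assembly of the global $z_n$ and $\eta_n$ from the local pieces and the bookkeeping of the overlap terms is where the work lies, and it is omitted.

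The genuine gap is in the convex case. You reduce the singularity of $\nu^s$ to showing that the density $\lambda^a$ of $\lambda$ equals $f(\chi(\cdot),\nabla u(\cdot))$ a.e., and you propose to prove the upper bound $\lambda^a(x_0)\leq f(\chi(x_0),\nabla u(x_0))$ by a blow-up with affine competitors glued in via Lemma \ref{Lemma2.4FMy}. You then explicitly flag the control of the matching-layer error as $r\to 0^+$ as ``technically delicate'' and do not execute it. This is precisely the part that cannot be waved away: the gluing estimate \eqref{fundest} introduces a term involving $\|u - \text{(affine)}\|_{L^p}$ on the annulus which is not $o(r^N)$ without further work, and making this blow-up close would amount to reproving the key result of Acerbi--Bouchitt\'e--Fonseca. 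The paper sidesteps the entire difficulty by citing \cite[Theorem 1.1]{ABF}, which, for convex integrands with $p$--$q$ growth, directly produces both a Lebesgue-singular measure $\nu^s$ and a sequence $\overline{u}_n\in W^{1,q}_{\rm loc}(U;\Rb^d)$, $\overline{u}_n\rightharpoonup u$ in $W^{1,p}$, with $\limsup_n\int_U f(\chi,\nabla\overline{u}_n)\,dx\leq\int_U f(\chi,\nabla u)\,dx + \nu^s(U)$. Taking $\chi_n\equiv\chi$, this immediately gives the upper bound \eqref{ubconvex}, which combined with the Ioffe lower bound \eqref{lbd} and the strong representation pins down $\lambda - f\mathcal{L}^N - |D\chi|$ as a non-negative measure dominated by a singular one, hence singular. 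Without invoking this (or another) established result, or actually carrying out the blow-up you sketch, the representation with a singular remainder is not proved.
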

\begin{proof}
By Theorem \ref{1<p<q} we can find a Radon measure $\lambda$ in 
$\overline \Omega$ such that
$$
\lambda(U)\leq {\cal F}_{\rm loc}(\chi, u; U)\leq \lambda({\overline U}),
$$
for every open set $U \subset \Omega$. We want to prove that 
$$
\lambda(U)\geq {\cal F}_{\rm loc}(\chi, u; U).
$$
Consider an increasing sequence of open, bounded, smooth sets 
$U_h \subset \subset U$, $h \in \mathbb N$, such that 
$\overline{U}_h \subset U_{h+1}$ and $U=\cup_{i=1}^\infty U_i$.
By definition of ${\cal F}_{\rm loc}$, for $h \geq 3$, we can find two 
sequences 
$u_{h,n} \in W^{1,q}_{\rm loc}
(U_h \setminus \overline{U}_{h-2};\mathbb R^d)$ 
and $\chi_{h,n}\in BV(U_h \setminus \overline{{U}_{h-2}};\{0,1\})$ 
such that
$$
u_{h,n} \rightharpoonup u \hbox{ in }
W^{1,p}(U_h \setminus \overline{U}_{h-2};\mathbb R^d), \; \; \; 
   \chi_{h,n} \overset{\ast}{\rightharpoonup} \chi \hbox{ in }
BV(U_h \setminus \overline{U}_{h-2};\{0,1\}),
$$
\begin{equation}\nonumber
\displaystyle{\int_{(U_h \setminus \overline{U}_{h-2})}
f(\chi_{h,n}(x), \nabla u_{h,n}(x)) \, dx 
+ |D \chi_{h,n}|(U_h \setminus \overline{U}_{h-2})
\leq {\cal F}_{\rm loc}(\chi, u;(U_h \setminus \overline{U}_{h-2}))
+ 2^{-h}.}
\end{equation}
Up to the extraction of subsequences we may assume that the above 
convergences, as $n \to +\infty $, also hold a.e. in 
$U_h \setminus {\overline U}_{h-2}$, and that  
$$
\|u_{h, n}- u\|_{L^p(U_h \setminus {\overline U}_{h -2};\mathbb R^d)} 
\leq 2^{-h-n}\alpha_h^{-1},
$$
and 
$$
\|\chi_{h, n}- \chi\|_{L^1(U_h \setminus {\overline U}_{h -2};\{0,1\})} 
\leq 2^{-h-n}\alpha_h^{-1},
$$
for some $\alpha_h$ to be determined later.
By Lemma \ref{Lemma2.4FMy},  we can connect $u_{h,n}$ with 
$u_{h+1,n}$ across 
$U_h \setminus \overline{U}_{h-1}$. Hence, there exist $V^+_{h,n}$ and 
$V^-_{h+1, n}$ such that
$V^+_{h,n}\subset U_h\setminus {\overline U}_{h-2}$, 
$V^-_{h+1,n} \subset U_{h+1}\setminus {\overline U}_{h-1}$, 
$U_{h+1}\setminus \overline{U}_{h-2}= V^+_{h,n}\cup V^-_{h+1,n}$,
$$
{\cal L}^{N}(V^+_{h,n}\cap V^-_{h+1,n})\leq C_h 2^{-h-n}\alpha_h^{-1},
$$
and there exist 
$z_{h,n} \in W^{1,q}(U_{h+1}\setminus \overline{U}_{h-2};\mathbb R^d)$, 
and $\eta_{h,n} \in BV(U_{h+1}\setminus \overline{U}_{h-2};\{0,1\})$ 
such that 
$z_{h,n}=u_{h,n}$, $\eta_{h,n}=\chi_{h,n}$ in 
$(U_{h+1}\setminus \overline{U}_{h-2})\setminus V^-_{h+1,n}$
and $z_{h,n}= u_{h+1,n}$, $\eta_{h,n}= \chi_{h+1,n}$ in 
$(U_{h+1}\setminus \overline{U}_{h-2})\setminus V^+_{h,n}$.

Indeed, an argument entirely similar to the one exploited in Lemma 
\ref{Lemma 3.4 FMy} leads us to define
$$
\eta_{h,n}=\left\{
\begin{array}{ll}
\chi_{h,n} &\hbox{ in }(U_{h+1}\setminus 
{\overline U}_{h-2})\setminus V^-_{h+1,n},\\
\chi_{h+1,n} &\hbox{ in }(U_{h+1}\setminus 
{\overline U}_{h-2})\setminus V^+_{h,n}
\end{array}\right.
$$
in such a way that the transition between $\chi_{h,n}$ and 
$\chi_{h+1,n}$ occurs along a curve denoted by $\gamma_{h,n}$ satisfying 
\begin{equation}\nonumber
\int_{\gamma_{h,n}}|\chi_{h,n}(x)-\chi_{h+1,n}(x)| \, d {\cal H}^{N-1}(x) 
\leq C_h 2^{-h-n}\alpha_h^{-1}.
\end{equation}
This choice is possible since in $V_{h+1,n}^-\cap V^+_{h,n}$ both 
$\chi_{h+1,n}$ and $\chi_{h,n}$ converge strongly in $L^1$ to $\chi$ and 
a formula analogous to \eqref{413bis} holds.
 
Also
\begin{align*}
\int_{V^+_{h,n}\cap V^-_{h+1,n}}
f(\eta_{h,n}(x),\nabla z_{h,n}(x)) \, dx 
& \leq  C \int_{V^+_{h,n}\cap V^-_{h+1,n}}
(1+ |\nabla z_{h,n}(x)|^q) \, dx \\ 
& \leq C C_h 2^{-h-n}\alpha_h^{-1} + C_h \alpha_h^{-\tau q}2^{-q\tau(n+h)},
\end{align*}
where $\tau$ is as in Lemma \ref{Lemma2.4FMy}, and $C_h$ takes into account 
the dependence on $h$. Next we specify the choice of $\alpha_h$ so that 
$\alpha_h^{-\tau q}C_h \leq 1.$ 

Let $z_n\in W^{1,q}(\Omega \setminus U_1;\mathbb R^d)$ be given by 
$z_n= z_{h,n}$, in $V^+_{h,n}\cap V^-_{h+1,n}$, and $z_n= u_{h+1,n}$, in 
$(U_{h+1}\setminus U_{h-1})\setminus (V^+_{h,n}\cup V^{-}_{h+2,n})$, and let
$$
\eta_{n}:=\left\{
\begin{array}{ll}
\eta_{h,n},  &\hbox{ in } V^+_{h,n}\cap V^-_{h+1,n},\\
\chi_{h+1,n}, &\hbox{ in }U_{h+1}\setminus U_{h-1} \setminus 
(V^+_{h,n}\cup V^-_{h+2,n}). 
\end{array}\right.
$$
In this way $\eta_n \in BV(U\setminus \overline{U}_1;\{0,1\})$ and 
$\eta_n \overset{\ast}{\rightharpoonup} \chi$ in 
$BV(U\setminus \overline{U}_1;\{0,1\})$. 
Fix $k \in \mathbb N$, $k \geq 2$, then
\begin{align*}
\int_{U \setminus \overline{U}_k} & f(\eta_n(x),\nabla z_n(x)) \, dx
+ |D \eta_n|(U \setminus \overline{U}_k) \\
& \leq \sum_{h=k+1}^{+\infty}\left(\int_{U_h\setminus 
\overline{U}_{h-1}} f(\eta_n(x),\nabla z_n(x)) \, dx 
+ |D \eta_n|({U_h\setminus \overline{U}_{h-1}})\right)\\
& \leq \sum_{h=k+1}^{+\infty} \left\{
\int_{U_{h+1}\setminus \overline{U}_{h-1}}
f(\chi_{h+1,n}(x), \nabla u_{h+1,n}(x)) \, dx 
+ |D \chi_{h+1,n}|(U_{h+1}\setminus \overline{U}_{h-1}) \right.\\
& + \int_{U_h\setminus \overline{U}_{h-2}}
f(\chi_{h,n}(x),\nabla u_{h,n}(x)) \, dx 
+ |D \chi_{h,n}|(U_{h}\setminus \overline{U}_{h-2}) \\
&  \left. + \int_{V^{+}_{h,n}\cap V^-_{h+1,n}}
f(\eta_{h,n}(x),\nabla z_{h,n}(x)) \, dx + 
\int_{\gamma_{h,n}}|\chi_{h,n}(x)-\chi_{h+1,n}(x)| \, 
d {\cal H}^{N-1}(x) \right\} \\
& \leq \sum_{h=k}^{+\infty} \Big(2 {\cal F}_{\rm loc}
(\chi, u; (U_{h+1}\setminus \overline{U}_{h-1})) + 2^{-h-1}\Big) 
+ \sum_{h= k+1}^{+\infty} 2^{-q\tau(n+h)}
+ \sum_{h=k+1}^{+\infty} 2^{-(n+h)} \alpha_h^{-1}C_h \\
&\leq \sum_{h=k}^{+\infty} 2 \lambda (U_{h+2}\setminus U_{h-1})
+ 2^{-k}+ C2^{-q\tau(n+k)} + C2^{-(n+k)}  \\
&\leq 6 \lambda(U\setminus U_{k-1})+ 2^{-k} + C2^{-(q\tau+1)(n+k)}.
\end{align*}
The argument used in Lemma \ref{Lemma 3.4 FMy} ensures that
$z_n \rightharpoonup u$ in 
$W^{1,p}(U\setminus \overline{U_k};\mathbb R^d)$. Since we also have
$\eta_n \overset{\ast}{\rightharpoonup} \chi$ in 
$BV(U\setminus \overline{U}_k;\{0,1\})$, it follows that
$$
{\cal F}_{\rm loc}(\chi, u; U\setminus\overline{U}_{k})
\leq 6 \lambda(U\setminus U_{k-1}) + C 2^{-k(1+q\tau)}.
$$
Hence \eqref{3.7FMy} is verified and by Remark 
\ref{measure representation} we can conclude that
$$\displaystyle 
\lambda(U)={\cal F}_{\rm loc}(\chi, u; U).
$$

The proof of the strong representation for $\mathcal{F}_{1,{\rm loc}}(\chi,u;\cdot)$ is analogous, replacing
the weak topology of $W^{1,p}(\Omega;\Rb^d)$ by the $BV(\Omega;\Rb^d)$ weak * topology for the 
convergence of the sequence $u_n$.

Concerning the last part of the statement, for $f$ convex, we start by observing that by \eqref{fconvex}, Ioffe's Theorem 
\ref{thm2.4ABFvariant}  (see also \cite[Theorem 5.8]{AFP}) and 
the superadditivity of the liminf, the functional
$$\int_{\Omega}f(\chi(x),\nabla u(x)) \, dx + |D \chi|(\Omega)$$
is lower semicontinuous with respect to the $L^1$ strong convergence for $\chi$ and the $W^{1,p}$ weak convergence for $u$. 
Indeed, for every 
$u \in W^{1,p}(\Omega;\mathbb R^d)$ and
$\chi \in BV(\Omega;\{0,1\})$, and for every 
$u_n \in W^{1,q}_{{\rm loc}}(\Omega;\mathbb R^d)$ and 
$\chi_n\in BV(\Omega;\{0,1\})$,  
such that $u_n\wto u$ in $W^{1,p}(\Omega;\mathbb R^d)$ and 
$\chi_n \to \chi$ in $L^1(\Omega;\{0,1\})$, it follows that
\[
\begin{split}
\liminf_{n\to +\infty}\left(\int_{\Omega}f(\chi_n(x),\nabla u_n(x)) \, dx 
+ |D \chi_n|(\Omega)\right)
\geq \int_{\Omega}f(\chi(x),\nabla u(x)) \, dx 
+ |D \chi|(\Omega).
\end{split}
\] 
Thus,
\begin{equation}\label{lbd}
\int_{\Omega}f(\chi(x),\nabla u(x)) \, dx + |D \chi|(\Omega) 
\leq {\cal F}_{\rm loc}(\chi, u;\Omega)\leq {\cal F}(\chi,u;\Omega),
\end{equation}
and a similar result holds in any open subset $U$ of $\Omega$.

In order to prove the opposite inequality, let $u \in W^{1,p}(\Omega;\mathbb R^d)$ and 
$\chi \in BV(\Omega;\{0,1\})$. By definition of 
$\mathcal F_{\rm loc}$, for every open subset $U \subset \Omega$,
we have that 
\begin{equation}\label{ineq}
{\mathcal F}_{\rm loc}(\chi,u; U)\leq \liminf_{n \to + \infty}
\int_U f(\chi(x), \nabla u_n(x))\,dx + |D \chi|(U),
\end{equation}
for every sequence $u_n \in W^{1,q}_{\rm loc}(U;\mathbb R^d)$, such that $u_n \rightharpoonup u$ in $W^{1,p}(U;\mathbb R^d)$.

On the other hand, \cite[Theorem 1.1]{ABF} guarantees the existence of a measure $\nu^s(u,\chi;\cdot)$, singular with respect to the Lebesgue measure, and a sequence $\overline u_n \in W^{1,q}_{\rm loc}(U;\mathbb R^d)$ such that   $\overline u_n \rightharpoonup u $ in $W^{1,p}(U;\mathbb R^d)$ and 
$$
\limsup_{n \to + \infty}\int_{U}f(\chi(x), \nabla \overline u_n(x))\,dx \leq \int_Uf(\chi(x), \nabla u(x))\,dx + \nu^s(\chi,u; U).$$
This, together with  \eqref{ineq}, ensures that
\begin{equation}\label{ubconvex}
{\mathcal F}_{\rm loc}(\chi,u; U)\leq \int_U f(\chi(x), \nabla u(x))\, dx + |D \chi|(U) + \nu^s(\chi,u; U)
\end{equation}
and concludes the proof of the upper bound.

Hence, \eqref{lbd} applied in an open set $U \subset \Omega$, \eqref{ubconvex} and the first part of this theorem yield the result.
\end{proof}

\begin{remark}\label{noncoerciandnotstrong}
{\rm \begin{itemize}

\item[i)] We recall that, as observed by Acerbi and Dal Maso in 
\cite{ADM}, 
the exponents considered in the previous result cannot be improved, as
neither $\cal F$ nor ${\cal F}_{\rm loc}$ admit any weak representation if 
$q=\frac{Np}{N-1}$.

\item[ii)] Furthermore, it was shown in \cite[Remark 3.3]{FMy}, when there is 
no dependence on the $\chi$ variable, that, in general, the measure 
representation for ${\cal F}$ is only weak, while \cite[Theorem 3.1]{FMy} 
ensures that ${\cal F}_{\rm loc}$ admits a strong representation.

\item[iii)]For the reader's convenience we also recall that if 
$U \subset\subset V \subset \Omega$, then
$$
{\cal F}_{\rm loc}(\chi,u;U) \leq {\cal F}(\chi, u; U)
\leq {\cal F}_{\rm loc}(\chi,u; V),
$$
thus the measures $\lambda$ and $\mu$ which represent ${\cal F}_{\rm loc}$ and 
${\cal F}$, respectively, are such that $\lambda=\mu\lfloor\Omega$.
\end{itemize}
Notice that parts ii) and iii) of this remark also hold in the 
case $p=1$.}

{\rm In the convex case, we point out that, according to iii)  
		the measure 
		$\mu(\chi, u;\cdot):= f(\chi,\nabla u)\lfloor{\cal L}^N
		+ |D \chi|(\cdot) + \nu^s(\chi,u;\cdot)$ weakly represents ${\mathcal F}(\chi, u; \cdot)$.   }
\end{remark}

\begin{remark}\label{remCosciaMuccietalia}
	
	{\rm Observe that the functionals in \cite[Theorem 6.2]{CM} are related to 
		${\cal F}$ and ${\cal F}_{\rm loc}$ when $f$ is convex. Indeed, in \cite{CM} the authors consider relaxation and integral representation of integral functionals of the type $\displaystyle \int_{\Omega}h(x,\nabla u(x)) \, dx$, when $h$ satisfies
		\begin{equation}
		\label{pxgrowth}
		|\xi|^{p(x)} \leq h(x,\xi) \leq C(1 + |\xi|^{p(x)}),
		\end{equation}
		so that the integrability exponent $p(x)$ of the admissible fields depends in a continuous or regular piecewise continuous way on the location in the body (see \cite[assumptions (2.9) and (2.1) and Definitions 2.1 and 2.2]{CM}, respectively).
		
		The bulk energy density in \eqref{F}, i.e.
		$ h(x,\xi):= \chi(x)W_1(\xi)+(1-\chi(x))W_2(\xi)$, can be seen as satisfying a generalization of \eqref{pxgrowth}, allowing for a wide discontinuity in the exponent $p(x)$ and prohibiting the use of the variable exponent space $W^{1,p(x)}$.  On the other hand, the functional spaces and the convergences involved in \eqref{F}, \eqref{introrelaxed} and \eqref{introrelaxedloc}
		are not as in \cite[Section 6]{CM}. Indeed, in \eqref{introrelaxed} and \eqref{introrelaxedloc} it is required that the approximating sequences $u_n$ are more regular in the whole domain, but converge to $u$ in a weaker sense than as expected from the coercivity condition in \eqref{pxgrowth}. The convergence stated in \cite[Section 6]{CM} is  $L^1$ strong, but the coercivity condition in \eqref{pxgrowth} might provide different bounds for $\nabla u_n$ in different subsets of $\Omega$. When $\chi$ is fixed, there are many more test sequences than in \eqref{introrelaxed} or \eqref{introrelaxedloc} thus avoiding the appearance of concentration of energy as in \cite[Example 1.15]{Mucci}, which is based on the example given in \cite[page 467]{Zhikov}.
		
		We also underline that in \cite[Theorems 1.8, 1.9 and Corollary 1.10]{Mucci} a measure representation was obtained for an energy similar to $\overline{\mathcal F}$ in \cite[Section 6]{CM}, but with the approximating sequences in $C^1\cap W^{1,p(x)}$. Like in our case, under the same regularity assumptions on $h(x,\xi)$ and on $p(x)$ as in \cite{CM}, the author is able to obtain a representation in terms of a Lebesgue integral  plus a singular measure.
			}
\end{remark}

\begin{remark}\label{threerem}
	
	{\rm Let $0\leq \theta \leq 1$ and let $F$ be as in \eqref{F}.  
		It is easy to see that the representation given in the second part of Theorem \ref{measureFloc} also holds for the functional 
		${\mathcal F}_{\rm volume}$ defined by 
		\begin{align}
		\mathcal{F}_{\operatorname*{volume}}\left(\chi,u\right)   &  
		:=\inf\left\{
		\underset{n\rightarrow +\infty}{\lim\inf}F\left(  \chi_{n},u_{n};\Omega\right)
		: u_{n}  \in W_{\operatorname*{loc}}^{1,q}
		\left(\Omega;\mathbb{R}^{d}\right),
		\chi_{n}  \in 
		BV\left(\Omega;\left\{0,1\right\}  \right),  \right. \nonumber\\
		&  \left.  u_{n}\rightharpoonup u\text{ in }W^{1,p}
		\left(  \Omega;\mathbb{R}^{d}\right), 
		~\chi_{n}\overset{\ast}{\rightharpoonup}\chi\text{ in }
		BV\left(\Omega;\left\{  0,1\right\}  \right), \frac{1}{|\Omega|}\int_{\Omega}\chi_n(x) \, dx = \theta\right\}  .\nonumber
		\end{align}
		Indeed, the lower bound inequality is obtained as in the second part of the proof of Theorem \ref{measureFloc}, observing that it suffices to 
		consider sequences $\chi_n$ whose integral in $\Omega$ amounts to $\theta$.
		Regarding the upper bound inequality, the same argument as in the last part of the proof applies, taking the 
		recovery sequence $\chi_n$ identically equal to $\chi$ with 
		$\displaystyle \frac{1}{|\Omega|}\int_\Omega \chi(x) \, dx = \theta$.}
\end{remark}
 
\subsection{A sufficient condition for lower semicontinuity}

Our aim in this subsection is to present suitable assumptions on the energy densities $W_i$ in \eqref{growth} in order to guarantee lower semicontinuity of the functional $F$ in \eqref{F}. 
Also, in the spirit of \cite{K}, it is possible to consider 
functionals related to ${\mathcal F}$ and $\mathcal F_{\rm loc}$ in 
\eqref{introrelaxed} and \eqref{introrelaxedloc}, respectively, but
with the infima taken with respect to different admissible sequences.
We introduce one such functional and compare it with the previous 
ones.

An argument entirely similar to the one in \cite[Lemma 3.1]{K} allows us to prove the following.

\begin{proposition}\label{Lemma 3.1K}
	Let $F$ be as in \eqref{F} where $f$ is as in \eqref{density} with $W_i$, $i=1,2$ satisfying the lower bound in \eqref{growth}. 
	Assume also that $f(b,\cdot)$ is closed $W^{1,p}$-quasiconvex, for every $b\in \{0,1\}$.
	Then 
	$$
	F(\chi, u) \leq \liminf_{n \to + \infty}F(\chi_n, u_n),
	$$
	for every $\chi_n \in BV(\Omega;\{0,1\})$ weakly $\ast$ converging to $\chi $ in $BV(\Omega;\{0,1\})$ and every $u_n\in W^{1,p}(\Omega;\mathbb R^d)$ weakly converging to $u$ in $W^{1,p}(\Omega;\mathbb R^d)$. 
\end{proposition}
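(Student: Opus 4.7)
The plan is to split $F(\chi,u)$ into the bulk part $\int_\Omega f(\chi,\nabla u)\,dx$ and the perimeter part $|D\chi|(\Omega)$, and handle them separately. The perimeter part is handled for free: as recalled in the preliminaries, $\chi \mapsto |D\chi|(\Omega)$ is lower semicontinuous with respect to $L^1_{\rm loc}$ convergence on $BV$, hence a fortiori with respect to the $BV$ weak $\ast$ convergence of $\chi_n$ to $\chi$. All the work is thus reduced to showing
\begin{equation*}
\int_\Omega f(\chi(x),\nabla u(x))\,dx \;\leq\; \liminf_{n\to+\infty}\int_\Omega f(\chi_n(x),\nabla u_n(x))\,dx.
\end{equation*}

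First I would pass to a (non-relabelled) subsequence along which the $\liminf$ on the right is attained as a limit, and along which $\chi_n \to \chi$ pointwise almost everywhere in $\Omega$ (possible from the $L^1$ convergence implied by the $BV$ weak $\ast$ convergence). Since $u_n \weak u$ in $W^{1,p}(\Omega;\Rb^d)$, the gradients $\nabla u_n$ are bounded in $L^p(\Omega;\Rb^{d\times N})$, so, up to extracting a further subsequence, they generate a gradient Young measure $\{\nu_x\}_{x\in\Omega}$; by the Kinderlehrer--Pedregal characterisation recalled just before Definition \ref{closedW1pqcx}, $\nu_x \in \mathcal{M}^p$ with barycentre $\overline{\nu_x} = \nabla u(x)$ for almost every $x \in \Omega$.

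Next I would combine the two sequences using Proposition \ref{FMYoungmeasures}: since $\chi_n \to \chi$ in measure, the pair $(\chi_n,\nabla u_n)$ generates the Young measure $\mu_x := \delta_{\chi(x)}\otimes \nu_x$. Because the lower bound in \eqref{growth} ensures $f \geq 0$, the Fatou-type part of the fundamental theorem of Young measures applies to the nonnegative Borel integrand $(b,\xi)\mapsto f(b,\xi)$ and yields
\begin{equation*}
\liminf_{n\to+\infty}\int_\Omega f(\chi_n(x),\nabla u_n(x))\,dx \;\geq\; \int_\Omega \int_{\Rb^{d\times N}} f(\chi(x),\xi)\,d\nu_x(\xi)\,dx.
\end{equation*}
Finally, the closed $W^{1,p}$-quasiconvexity hypothesis on $f(b,\cdot)$ for each $b \in \{0,1\}$ allows the Jensen inequality \eqref{Jensen} to be invoked at a.e. $x \in \Omega$ with the probability measure $\nu_x \in \mathcal{M}^p$, giving $\int f(\chi(x),\xi)\,d\nu_x(\xi) \geq f(\chi(x),\overline{\nu_x}) = f(\chi(x),\nabla u(x))$. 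Integration over $\Omega$ and reinstating the perimeter term completes the proof.

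The delicate point I expect to warrant the most care is the joint Young-measure step: one has to verify that $\nu_x$ is genuinely in $\mathcal{M}^p$ (so that the closed $W^{1,p}$-quasiconvexity hypothesis can be used at almost every $x$), and that Proposition \ref{FMYoungmeasures} legitimately promotes the separate convergences into the tensor-product Young measure $\delta_{\chi(x)}\otimes\nu_x$ required for the Young-measure Fatou estimate; the a.e.\ extraction from the $L^1$ convergence of $\chi_n$ is harmless because subsequences suffice when proving a $\liminf$ inequality.
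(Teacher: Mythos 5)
Your proposal is correct and follows essentially the same route as the paper: reduce to the bulk term via lower semicontinuity of the total variation and superadditivity of the liminf, identify the Young measure generated by $(\chi_n,\nabla u_n)$ as $\delta_{\chi(x)}\otimes\nu_x$ via Proposition \ref{FMYoungmeasures}, apply the Fatou part of the fundamental theorem of Young measures, and conclude with Jensen's inequality for the closed $W^{1,p}$-quasiconvex integrand. The ``delicate point'' you flag (that a.e.\ localisation $\nu_x$ lies in ${\mathcal M}^p$ so Jensen applies pointwise) is exactly what the paper delegates to Kristensen's Lemma 3.1, so your argument matches the paper's in both structure and substance.
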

\begin{proof}
	By the lower semicontinuity of the total variation $|D\chi|$ and the superadditivity of the liminf, it suffices to 
	consider the asymptotic behaviour of 
	$$\int_{\Omega}f(\chi_n(x), \nabla u_n(x)) \, dx$$ 
	when $u_n \in W^{1,p}(\Omega;\mathbb R^d)$ and $u_n\rightharpoonup u$ in $W^{1,p}(\Omega;\mathbb R^d)$ and $\chi_n \to \chi$ in $L^1(\Omega;\{0,1\})$. 
	Let $\mu $ be the Young measure generated by $(\chi_n, \nabla u_n)$. 
	By Proposition \ref{FMYoungmeasures}, $\mu= \delta_{\chi(x)}\otimes \nu_x$, where $\nu$ is the gradient 
	Young measure generated by $\nabla u_n$.
	By \cite[Theorem 2.2]{FMAq}, and arguing as in the first part of the proof of Theorem 3.7 therein, we have
	\begin{align*}
	\liminf_{n\to +\infty}\int_{\Omega}f(\chi_n(x),\nabla u_n(x)) \, dx & \geq 
	\int_{\Omega}\int_{\mathbb R\times \mathbb R^{d\times N}}f(a,\xi) \, d(\delta_{\chi(x)}\otimes \nu_x)(a,\xi) \, dx\\
	& \geq \int_{\Omega}\int_{\mathbb R^N}f(\chi(x),\xi) \, d\nu_x(\xi)\, dx.
	\end{align*} 
	The proof will be complete provided we guarantee that
	$$
	\int_{\mathbb R^N}f(\chi(x),\xi) \, d\nu_x(\xi) \geq f(\chi(x),\nabla u(x)) \hbox{ for a.e. } x \in \Omega,
	$$ 
	and this is a consequence of \cite[Lemma 3.1]{K}.
\end{proof}

As in \cite[Corollary 1.2]{K}, one could define for $f$ as in \eqref{density} and satisfying 
\eqref{growth}, the following functional
\begin{equation}
\nonumber
\tilde{I}(\chi,u;\Omega):=\inf\left\{\liminf_{n\to +\infty}\int_{\Omega}f(\chi_n(x),\nabla u_n(x))\, dx 
+ |D \chi_n|(\Omega)\right\},
\end{equation}
where the infimum is taken over all sequences $u_n\in W^{1,p}(\Omega;\mathbb R^d)$, converging to $u$ weakly 
in $W^{1,p}(\Omega;\mathbb R^d)$, and all sequences $\chi_n\in BV(\Omega;\{0,1\})$, converging weakly $\ast$ to 
$\chi \in BV(\Omega;\{0,1\})$.

Clearly by Proposition \ref{Lemma 3.1K}, denoting by $\tilde{f}(b,\cdot)$ the closed $W^{1,p}$- quasiconvex envelope of $f(b,\cdot)$ (see Definition \ref{qcxenv}), we have that
\begin{equation}
\label{ineqfunctionals}
\int_{\Omega}\tilde{f}(\chi(x), \nabla u(x))\,dx + |D \chi|(\Omega) \leq\tilde{I}(\chi,u;\Omega)\leq {\mathcal F}_{\rm loc}(\chi,u;\Omega)\leq {\mathcal F}(\chi,u;\Omega), 
\end{equation}
when they are all finite.

Notice that, in general, we are unable to compute these functionals, in the sense of providing an explicit representation of them.   However, in the one dimensional case, since $f^{\ast \ast}(b,\cdot)=\tilde{f}(b,\cdot)$ for every $b \in \{0,1\}$, 
the four functionals in \eqref{ineqfunctionals} coincide. Indeed, by definition, any convex function is closed $W^{1,p}$-quasiconvex since Jensen's inequality holds for any probability measure.
Conversely, any closed $W^{1,p}$-quasiconvex function is $W^{1,p}$-quasiconvex in light of \cite[Corollary 3.4]{K} and \cite[Corollary 3.2]{BM}, and the latter notion is known to be equivalent to convexity in the scalar case.


\subsection{ The one dimensional case}

Theorem  \ref{measureFloc} can be improved in the one dimensional case. In fact, in this case, no singular measure appears, which does not contradict the example given in \cite[page 467]{Zhikov}, since the latter relies on the fact that functions in $W^{1,p}(\Omega;\mathbb R^d)$, with $\Omega \subset \mathbb R^2$ and $1<p<2$, are not necessarily continuous.  In fact, the next result also generalizes to the optimal design context the result of Ben Belgacem \cite[Theorem 4.1]{BB}, since we do not assume convexity in the original density $f(b,\cdot)$.

\begin{proposition}\label{lb1D}
	Let $I$ be an open interval in $\mathbb R$, let $f$ be given by \eqref{density} with $W_i$, $i=1,2$, satisfying 
	\eqref{growth} and let $p,~q$ be such that $1<p \leq q$.
	Let $\chi \in BV\left(I;\left\{0,1\right\}\right)$
	and $u \in W^{1,p}\left(I;\mathbb{R}^{d}\right)$.
	Then, 
	\begin{equation}
		\nonumber
		{\cal F}(\chi, u;I)= \int_If^{\ast\ast}(\chi(x), u'(x)) \, dx 
		+ |D \chi|(I).
	\end{equation}

\end{proposition}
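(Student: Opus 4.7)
The plan is to prove the two inequalities separately, leveraging the lower semicontinuity machinery already developed in the paper for the lower bound and a direct convex relaxation construction for the upper bound.

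\textbf{Lower bound.} I would apply the chain of inequalities \eqref{ineqfunctionals}, localized to the interval $I$:
\[
\int_I \tilde{f}(\chi(x), u'(x))\, dx + |D\chi|(I) \leq \tilde{I}(\chi,u;I) \leq \mathcal{F}_{\rm loc}(\chi,u;I) \leq \mathcal{F}(\chi,u;I).
\]
As discussed immediately after \eqref{ineqfunctionals}, in the scalar one-dimensional setting any $W^{1,p}$-quasiconvex function is convex (via \cite[Corollary 3.4]{K} and \cite[Corollary 3.2]{BM}), so for each $b \in \{0,1\}$ the closed $W^{1,p}$-quasiconvex envelope $\tilde{f}(b,\cdot)$ coincides with the convex envelope $f^{**}(b,\cdot)$, and the lower bound follows at once.

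\textbf{Upper bound.} The idea is to take the constant sequence $\chi_n \equiv \chi$, so that $|D\chi_n|(I) = |D\chi|(I)$ contributes nothing extra, and to reduce the problem to constructing $u_n \in W^{1,q}(I;\mathbb{R}^d)$ with $u_n \rightharpoonup u$ in $W^{1,p}(I;\mathbb{R}^d)$ and
\[
\limsup_{n \to \infty} \int_I f(\chi(x), u_n'(x))\, dx \leq \int_I f^{**}(\chi(x), u'(x))\, dx.
\]
First I would treat the case $u' \in L^\infty(I;\mathbb{R}^d)$. A Carath\'eodory measurable selection on the epigraph of $f(\chi(x),\cdot)$ provides bounded measurable maps $\lambda: I \to [0,1]$ and $\xi_1, \xi_2: I \to \mathbb{R}^d$ such that, a.e. in $I$,
\[
u'(x) = \lambda(x)\xi_1(x) + (1-\lambda(x))\xi_2(x), \qquad f^{**}(\chi(x), u'(x)) = \lambda(x) f(\chi(x), \xi_1(x)) + (1-\lambda(x)) f(\chi(x), \xi_2(x)).
\]
A standard fine-oscillation (laminate) construction on a grid of mesh $1/n$, where on each subinterval $u_n'$ is set equal to $\xi_1(x)$ or $\xi_2(x)$ in proportions $\lambda(x)$ and $1-\lambda(x)$ and $u_n$ is chosen so that $\|u_n - u\|_{L^\infty(I;\mathbb{R}^d)} \to 0$, yields $u_n \in W^{1,\infty}(I;\mathbb{R}^d) \subset W^{1,q}(I;\mathbb{R}^d)$ with $u_n' \rightharpoonup u'$ weakly in $L^p$ and with energy converging to $\int_I f^{**}(\chi, u')\, dx$ by continuity of $f$ and dominated convergence.

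For general $u \in W^{1,p}(I;\mathbb{R}^d)$ I would truncate, setting $v^k(x) := u'(x)$ where $|u'(x)| \leq k$ and $v^k(x) := 0$ otherwise, and taking $u^k$ to be a primitive of $v^k$ adjusted so that $u^k \to u$ in $W^{1,p}(I;\mathbb{R}^d)$. Applying the bounded case to each $u^k$ produces a recovery sequence $u_n^k$, and since $f^{**}(b,\cdot)$ inherits the $q$-growth bound in \eqref{growth}, dominated convergence gives $\int_I f^{**}(\chi, (u^k)')\, dx \to \int_I f^{**}(\chi, u')\, dx$ as $k \to \infty$. A standard diagonal extraction then produces the required sequence.

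\textbf{Main obstacle.} The delicate point is the integrability mismatch between the space of admissible approximations $W^{1,q}$ and the space $W^{1,p}$ containing the target $u$. This is exactly the difficulty addressed globally in Lemma \ref{Lemma2.4FMy}, but in one dimension it can be bypassed by direct truncation because $f^{**}(b,\cdot) \leq \beta(1+|\cdot|^q)$, so truncating $u'$ at a large height alters the target energy by an arbitrarily small amount. Everything else is then a routine application of the one-dimensional theory of convex relaxation.
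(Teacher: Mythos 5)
Your overall architecture is sound and the upper bound follows a genuinely different route from the paper's: where the paper mollifies $u$ to land in $W^{1,q}$ and then must split $I$ at the finitely many jumps of $\chi$ and invoke Jensen's inequality to pass from $\int_I f^{\ast\ast}(\chi,(\rho_k\ast u)')\,dx$ back to $\int_I f^{\ast\ast}(\chi,u')\,dx$, your truncation $v^k=u'\mathbf{1}_{\{|u'|\le k\}}$ does not mix values of $u'$ across the jump set of $\chi$, so the passage $\int_I f^{\ast\ast}(\chi,(u^k)')\,dx\to\int_I f^{\ast\ast}(\chi,u')\,dx$ is immediate (split according to $\{|u'|\le k\}$ and its complement; the statement is trivial if the right-hand side is infinite). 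The lower bound via \eqref{ineqfunctionals} and the identification $\tilde f(b,\cdot)=f^{\ast\ast}(b,\cdot)$ in one dimension is correct, though it routes through the Young-measure machinery of Proposition \ref{Lemma 3.1K} where the paper simply uses Ioffe's theorem.

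There are, however, two genuine gaps in the upper bound. First, the exact measurable selection you invoke need not exist: the infimum defining $f^{\ast\ast}(b,\xi_0)$ is in general not attained (take $f$ positive, bounded, tending to $0$ at infinity, so that $f^{\ast\ast}\equiv 0$), and for $\mathbb R^d$-valued $u$ one needs up to $d+1$ points rather than two. Working instead with an $\varepsilon$-approximate selection destroys the boundedness of the $\xi_i(x)$, hence both the claimed $W^{1,\infty}$ regularity of the laminate and its boundedness in $W^{1,p}$. The clean repair is not to re-prove the relaxation theorem but to apply \cite[Theorem 8.4.1]{FL2} with $p=q$ on each of the finitely many subintervals where $\chi$ is constant, to the truncated function $u^k\in W^{1,\infty}(I;\mathbb R^d)$ --- exactly the role the citation plays in the paper. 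Second, and more seriously, the final ``standard diagonal extraction'' is not available as stated: the recovery sequences $u^k_n\rightharpoonup u^k$ are bounded in $W^{1,p}$ only with $k$-dependent bounds (there is no coercivity assumption in \eqref{growth}), so a diagonal sequence need not be bounded in $W^{1,p}$ and hence need not converge weakly to $u$, which is what admissibility in $\mathcal F(\chi,u;I)$ requires. This must be addressed either by first assuming \eqref{coer} --- in which case Proposition \ref{attain}(ii) legitimizes the diagonalization, since then $\mathcal F(\chi,u;I)\le\liminf_k\mathcal F(\chi,u^k;I)$ --- and then removing coercivity by the $\e$-perturbation of Step 2 of Theorem \ref{1<p<q} (noting that $(f+\e|\cdot|^p)^{\ast\ast}\downarrow f^{\ast\ast}$ pointwise as $\e\to 0^+$), or by some quantitative closeness statement in the spirit of \eqref{blabla}. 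As written, the proposal does not close this step.
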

\begin{proof} The result is true when $p=q$, see \cite[Theorem 8.4.1]{FL2}, so in what follows we assume that $p < q$.

	The lower bound follows as in the proof of the last part of Theorem \ref{measureFloc}. 	Indeed, since $ f(b,\cdot)\geq f^{\ast \ast}(b,\cdot)$ for every 
	$b \in \{0,1\}$, then, for every 
	$u \in W^{1,p}(I;\mathbb R^d)$ and
	$\chi \in BV(I;\{0,1\})$, 
	\begin{equation}\label{1d}
		\int_If^{\ast\ast}(\chi(x),u'(x)) \, dx + |D \chi|(I) 
		\leq {\cal F}_{\rm loc}(\chi, u;I)\leq {\cal F}(\chi,u;I).
	\end{equation}
	
	To show the upper bound, let $u \in W^{1,p}(I;\mathbb R^d)$ and 
	$\chi \in BV(I;\{0,1\})$. 
	We follow the proof in \cite[Remark 4.6]{FMy} and take a 
	convolution kernel $\rho \geq 0$, with support on $[-1,1]$ and
	such that $\int_{\mathbb R}\rho(x) \, dx=1$. Given $k \in \mathbb N$, set
	$\rho_k(x) = k\rho(kx)$ and consider the usual mollification 
	$\rho_k\ast u$. Since $u$ is continuous up to $\overline I$ we can extend it to a larger interval $J \supset \supset I$. By 
	standard relaxation results \cite[Theorem 8.4.1]{FL2}, and since $\chi$ is fixed,
	for each $k$, there exists 
	a sequence $v_{k,n} \in W^{1,q}(I;\mathbb R^d)$ such that
	$$
	v_{k,n} \wto \rho_k \ast u \hbox{ weakly in }W^{1,q}(I;\mathbb R^d )
	\hbox{ as } n \to +\infty,
	$$
	and
	$$
	\lim_{n \to +\infty} \int_I  f(\chi(x),  v'_{k,n}(x)) \, dx
	= \int_I f^{\ast \ast}(\chi(x),(\rho_k \ast u)'(x)) \, dx. 
	$$
	As $p < q$, we may extract a diagonal subsequence $u_k:= v_{k, n(k)}$ such that
	\begin{equation}\label{blabla}
	\|u_k - \rho_k \ast u\|_{W^{1,p}} \leq \frac{1}{k},
	\end{equation}
	and
	$$
	\left|\int_I f(\chi(x),  u_k'(x)) \, dx - 
	\int_I f^{\ast \ast}(\chi(x), (\rho_k \ast u)'(x)) \, dx\right|
	\leq \frac{1}{k}.
	$$
	Therefore $u_k \to u$ in ${W^{1,p}}(I;\mathbb R^d)$ 
	and
	\begin{align}\label{ubd1}
		{\cal F}(\chi,u;I)& \leq 
		\liminf_{k \to +\infty}\int_I f(\chi(x), u_k'(x)) \, dx 
		+ |D \chi|(I) \nonumber \\
		& = \liminf_{k \to +\infty} 
		\int_I f^{\ast \ast}(\chi(x),  (\rho_k \ast u)'(x)) \, dx + |D \chi|(I).
	\end{align}
	Notice that
	$f^{\ast \ast}(b, \xi)= b W_1^{\ast \ast}(\xi)+ (1-b)W_2^{\ast \ast}(\xi)$, 
	since $b$ is either $0$ or $1$. As this function is convex in the 
	second variable, we can exploit the fact that the measures $\mu^k_x$ defined by
	$$
	\left\langle\mu^k_x,\varphi\right\rangle:=\int_{\mathbb R} \rho_k(x-y)\varphi(y) \, dy
	$$
	are probability measures. Since $\chi \in BV(I;\{0,1\})$, it has finitely many discontinuity points, so we can divide the interval $I$ into a finite union of intervals $\{I_j\}_{j=1}^l$, corresponding to the largest connected sets where $\chi$ is constant. We can assume without loss of generality that each $I_j$ is open. 
	
	 Hence, using Jensen's inequality, 
	we have 
	\begin{align}
		\liminf_{k \to +\infty} 
		\int_I  f^{\ast \ast}(\chi(x), (\rho_k \ast u)'(x)) \, dx &=	\liminf_{k \to +\infty} \sum_{j=1}^l\int_{I_j}f^{\ast \ast}(\chi(x), (\rho_k \ast u)'(x)) \, dx \nonumber \\
		 = \liminf_{k\to +\infty}\sum_{j=1}^l\int_{I_j}
		W_i^{\ast \ast} ((\rho_k \ast u)'(x)) \, dx 
		 &= \liminf_{k \to +\infty} \sum_{j=1}^l\int_{I_j} 
		W_i^{\ast \ast}(\left\langle\mu^k_x, u'\right\rangle) \, dx 
	 \nonumber\\
		 \leq \limsup_{k \to +\infty}\sum_{j=1}^l\int_{I_j} 
		\left\langle\mu^k_x, W_i^{\ast \ast}(u')\right\rangle \, dx 
		& = \int_I f^{\ast \ast}(\chi(x),  u'(x)) \, dx, \nonumber
	\end{align}
	where $W^{\ast \ast}_i$ is either $W_1^{\ast \ast}$ or $W_2^{\ast \ast}$ depending on whether $\chi$ is $1$ or $0$
in each $I_j$.
	From \eqref{ubd1}, we obtain
	$$
	{\cal F}(\chi, u; I) \leq \int_I f^{\ast \ast}(\chi(x), u'(x)) \, dx 
	+|D \chi|(I),
	$$
	which, together with the lower bound obtained in the first part of the 
	proof, yields
	\begin{equation}
		\nonumber
		{\cal F}(\chi, u; I)=\int_I f^{\ast \ast}(\chi(x),u'(x)) \, dx 
		+|D \chi|(I).
	\end{equation}
\end{proof}
\begin{remark}	\label{noLavrentieff}
{\rm 	Recalling Remark \ref{threerem}, we observe that, when $\Omega$ is replaced by an interval $I$ of $\mathbb R$, 
	${\mathcal F}, \mathcal F_{\rm loc}$ and $\mathcal F_{\rm volume}$ coincide with the functional
	\begin{align}
	\widetilde{\mathcal{F}}_{\operatorname*{volume}}\left(  \chi,u\right)   &  
	:=\inf\left\{
	\underset{n\rightarrow +\infty}{\lim\inf}F\left(  \chi_{n},u_{n};I\right)
	:  u_{n}  \in W_{\operatorname*{loc}}^{1,p}
	\left(I;\mathbb{R}^{d}\right),
	\chi_{n}  \in
	BV\left(I;\left\{0,1\right\}  \right),  \right. \nonumber\\
	&  \left.  u_{n}\rightharpoonup u\text{ in }W^{1,p}
	\left(  I;\mathbb{R}^{d}\right), 
	~\chi_{n}\overset{\ast}{\rightharpoonup}\chi\text{ in }
	BV\left(I;\left\{  0,1\right\}  \right), \frac{1}{|I|}\int_{I}\chi_n(x) \, dx = \theta\right\}  .\nonumber
	\end{align}}
\end{remark}

\begin{remark}\label{weakconvW11}
{\rm We define
	\begin{align}
	\mathcal{G}_1\left(\chi,u;I\right)   &  :=\inf\left\{  
	\underset{n\rightarrow +\infty}
	{\lim\inf}\, F\left(\chi_{n},u_{n};I\right)  :
	u_{n}\in W^{1,q}\left(  I;\mathbb{R}^{d}\right),
	\chi_{n}\in BV\left(  I;\left\{  0,1\right\}  \right),\right. 
	\nonumber\\
	&  \hspace{3cm} \left.  u_{n}\rightharpoonup u
	\text{ in } W^{1,1}\left(I;\mathbb{R}^{d}\right),
	\chi_{n}\overset{\ast}{\rightharpoonup}\chi
	\text{ in } BV\left(I;\left\{0,1\right\}  \right)  \right\}, \nonumber
	\end{align}
	and, by replacing $W^{1,q}$ by $W^{1,q}_{\rm loc}$, 
	$\mathcal{G}_{1, {\rm loc}}\left(\chi,u;I\right)$. 
	Arguing as in the previous proof we can obtain a representation for both ${\mathcal G}_1$ and ${\mathcal G}_{1, \rm loc}$ in terms of $\displaystyle \int_I f^{\ast \ast}(\chi(x), u'((x)) \,dx + |D \chi|(I)$. Indeed, the lower bound inequality is a consequence of Theorem \ref{thm2.4ABFvariant}, whereas the upper bound inequality holds replacing the $W^{1,p}$ strong convergence in \eqref{blabla} by strong convergence in $W^{1,r}$ for some $1<r<q$, which in turn ensures weak convergence in $W^{1,1}(I;\mathbb R^d)$. }
	
\end{remark}

\section{Applications to strings}

In the sequel we apply the techniques of the second part of the proof of Theorem \ref{measureFloc} to identify the optimal design of strings by means of dimension reduction, in the spirit of the models described in \cite{FF, BFF}, which also appear in the context of brutal damage evolution. Namely one can deduce, as a rigorous 3$D$-1$D$ $\Gamma$-limit as $\varepsilon \to 0^+$, the optimal design of an elastic string $\Omega(\varepsilon):= B(0,\varepsilon)\times (0,l)$, with $B(0,\varepsilon) \subset \mathbb R^2$ the ball centered at $0$ with radius $\varepsilon$ and $l\in \mathbb R^+$, constituted by Ogden type 
materials, which truly exhibit a gap between the growth and coercivity exponents in the hyperelastic density. 

In the following we adopt the standard scaling (see \cite{CZ1} and the references quoted therein) which maps $x\equiv (x_1,x_2,x_3)\in \Omega(\varepsilon) \to (\frac{1}{\varepsilon} x_1,\frac{1}{\varepsilon} x_2, x_3)\in \Omega:=B(0,1)\times (0,l)$, in order to state the problem in a fixed domain (see \eqref{FDR} below). We also denote by $\nabla_\alpha u$ and $D_\alpha \chi$, respectively, the partial derivatives of $u$ and $\chi$ with respect to $x_\alpha\equiv(x_1,x_2)$.

In the model under consideration, the sequence 
$\chi_\varepsilon \in BV(\Omega;\{0,1\})$ represents the design regions, whereas $u_\varepsilon \in W^{1,q}(\Omega;\mathbb R^3)$
is the sequence of deformations, which are possibly clamped at the extremities of the string.
Standard arguments in dimension reduction (see \cite{ABP} and \cite{CZ1}) ensure that energy  bounded sequences (see the term in square brackets of \eqref{FDR}), converge (up to a subsequence), in the relevant topology, to fields $(\chi, u)$ such that $D_\alpha \chi$ and $\nabla_\alpha u$ are null, thus they can be identified, with an abuse of notation, with fields $(\chi, u)\in BV((0,l);\{0,1\}) \times W^{1,p}((0,l);\mathbb R^3)$.
In what follows we use this notation.

\begin{proposition}\label{3D1DOgden}
	
	Let $\Omega:= B(0,1)\times [0,l)$, where $B(0,1)$ denotes the unit ball in $\mathbb R^2$ and $l \in \mathbb R^+$. 
	Let $f:\{0,1\} \times \mathbb R^{3 \times 3}\to \mathbb R$ be a continuous function as in \eqref{density} and
	assume also that 
	\begin{equation}\label{Qffastast}
	Qf(b,\cdot)=f^{\ast \ast}(b,\cdot), 
	\end{equation}
	where $Q(\cdot)$ denotes the quasiconvex envelope of $f(b,\cdot)$ (see Definition \ref{qcxenv}).
	Let $1<p \leq q < +\infty$ and
	assume that there exist $c, c_0, C \in \mathbb R^+$ such that
	\begin{equation}\label{Ogdengrowth}
		c|\xi|^p- c_0\leq f(b,\xi)\leq C(1+|\xi|^q),
	\end{equation}
	for every $ b \in \{0,1\}$ and $\xi \in \mathbb R^{3\times 3}$.
	Let 

\begin{align}\label{FDR}
			\displaystyle{\mathcal F}^{DR}(\chi,u)&:=\inf \left\{ \liminf_{\varepsilon \to 0^+}\left[
			\int_{\Omega}f(\chi_\varepsilon(x), \left(\tfrac{1}{\varepsilon}\nabla_\alpha u_\varepsilon(x), \nabla_3 u_\varepsilon(x))\right) \,dx 
			+ \left|\left(\tfrac{1}{\varepsilon}D_\alpha \chi_\varepsilon, D_3 \chi_{\varepsilon}\right)\right|(\Omega)\right]: \right.\\ 	\nonumber
		\\ & \hspace{0,3cm}
\left.	u_\varepsilon\in W^{1,q}(\Omega;\mathbb R^3), 
			\chi_\varepsilon\in BV(\Omega;\{0,1\}),
			u_\varepsilon \rightharpoonup u 
            \hbox{ in }W^{1,p}(\Omega;\mathbb R^3), 
				\chi_\varepsilon \weakstar \chi \hbox{ in } BV(\Omega;\{0,1\})\right\}.
		\nonumber
	\end{align}
	Then 
	\begin{equation}\label{repFDR}
		{\mathcal F}^{DR}(\chi,u)= \pi \left(\int_{0}^l f^{\ast \ast}_0(\chi(x_3),u'(x_3)) \,dx_3 + |D \chi|(0,l)\right),
	\end{equation}
	for every $\chi \in BV((0,l);\{0,1\})$ and every $u \in W^{1,p}((0,l);\mathbb R^3)$ for which ${\mathcal F}^{DR}(\chi,u)$ 
	is finite, where
	\begin{equation}\nonumber
		f_0(b,\xi_3):=\inf_{(\xi_1,\xi_2)\in \mathbb R^{2\times 3}}f(b,\xi_1,\xi_2,\xi_3),  
		\hbox{ with } b\in \{0,1\}, (\xi_1,\xi_2,\xi_3) \equiv \xi \in \mathbb R^{3\times 3},
	\end{equation}
	and $f_0^{\ast \ast}$ denotes its convex envelope with respect to the second variable.
\end{proposition}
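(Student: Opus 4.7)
The plan is to prove the two inequalities separately. The lower bound will reduce to a pointwise minorization followed by Ioffe's theorem; the upper bound will follow from a mollification of $u$ in $x_3$ together with a dimension-reduction ansatz and diagonalization, in the spirit of Proposition \ref{lb1D}.

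\textbf{Lower bound.} For admissible sequences $\chi_{\varepsilon}\weakstar \chi$ in $BV(\Omega;\{0,1\})$ and $u_\varepsilon \weak u$ in $W^{1,p}(\Omega;\Rb^3)$, I would begin from the pointwise chain
$$f\bigl(\chi_\varepsilon, \tfrac{1}{\varepsilon}\nabla_\alpha u_\varepsilon, \nabla_3 u_\varepsilon\bigr)\geq f_0(\chi_\varepsilon, \nabla_3 u_\varepsilon)\geq f_0^{\ast\ast}(\chi_\varepsilon, \nabla_3 u_\varepsilon),$$
which reduces the estimate to the lower semicontinuity of a convex integrand. Since $\chi_\varepsilon \to \chi$ in $L^1(\Omega)$ and $\nabla_3 u_\varepsilon \weak \nabla_3 u$ in $L^p(\Omega;\Rb^3)$, Theorem \ref{thm2.4ABFvariant} (applied to an integrand depending only on the third column of the gradient) will give
$$\liminf_{\varepsilon \to 0^+}\int_\Omega f_0^{\ast\ast}(\chi_\varepsilon, \nabla_3 u_\varepsilon)\,dx \geq \int_\Omega f_0^{\ast\ast}(\chi, \nabla_3 u)\,dx = \pi \int_0^l f_0^{\ast\ast}(\chi, u')\,dx_3,$$
exploiting that $\chi, u$ are independent of $x_\alpha$ and $\mathcal{L}^2(B(0,1))=\pi$. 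For the perimeter I will use $\bigl|(\tfrac{1}{\varepsilon}D_\alpha \chi_\varepsilon, D_3 \chi_\varepsilon)\bigr|(\Omega)\geq |D_3 \chi_\varepsilon|(\Omega)$ together with $L^1$-lower semicontinuity of the total variation to obtain $\liminf_\varepsilon |D_3 \chi_\varepsilon|(\Omega)\geq |D_3\chi|(\Omega)=\pi|D\chi|((0,l))$.

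\textbf{Upper bound.} I will take $\chi_\varepsilon\equiv \chi$ (extended trivially in $x_\alpha$), so that the perimeter contributes exactly $\pi|D\chi|((0,l))$. The crucial algebraic identity
$$f_0^{\ast\ast}(b,\xi_3)=\inf_{\xi_\alpha\in\Rb^{2\times 3}}f^{\ast\ast}(b,\xi_\alpha, \xi_3)=\inf_{\xi_\alpha}Qf(b,\xi_\alpha, \xi_3),$$
the first equality coming from the fact that a partial infimum of a convex function is convex and the second from \eqref{Qffastast}, will let me pick---by a measurable selection, for each $\delta>0$---maps $b_1,b_2\colon(0,l)\to\Rb^3$ realizing the infimum up to $\delta$. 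After mollifying $u$ in $x_3$ (extended to a larger interval) to obtain $u_k\in C^\infty$ with $u_k\to u$ in $W^{1,p}((0,l);\Rb^3)$, and similarly regularizing $b_1,b_2$, I will consider the ansatz
$$v_{k,\varepsilon}(x):=u_k(x_3)+\varepsilon\bigl(x_1 b_1^k(x_3)+x_2 b_2^k(x_3)\bigr),$$
so that $\tfrac{1}{\varepsilon}\nabla_\alpha v_{k,\varepsilon}=(b_1^k,b_2^k)$ and $\nabla_3 v_{k,\varepsilon}\to u_k'$ strongly. A standard relaxation argument at $\chi$ fixed, using $Qf=f^{\ast\ast}$ and the coercivity in \eqref{Ogdengrowth}, will produce approximants $w_{k,\varepsilon,n}\in W^{1,q}(\Omega;\Rb^3)$ with $w_{k,\varepsilon,n}\weak v_{k,\varepsilon}$ in $W^{1,q}$ as $n\to\infty$ and
$$\lim_{n\to\infty}\int_\Omega f\bigl(\chi, \tfrac{1}{\varepsilon}\nabla_\alpha w_{k,\varepsilon,n}, \nabla_3 w_{k,\varepsilon,n}\bigr)\,dx = \int_\Omega Qf\bigl(\chi, \tfrac{1}{\varepsilon}\nabla_\alpha v_{k,\varepsilon}, \nabla_3 v_{k,\varepsilon}\bigr)\,dx.$$
Sending $\varepsilon\to 0^+$ and using the choice of $b_1^k, b_2^k$ will give, up to an error $\delta\pi l$, the bound $\pi\int_0^l f_0^{\ast\ast}(\chi, u_k')\,dx_3$; finally Jensen's inequality applied to $u_k'=(\rho_k\ast u)'$, exactly as in the closing argument of Proposition \ref{lb1D}, combined with a diagonal extraction in $(k,\varepsilon,n)$ and letting $\delta\to 0^+$, will yield the required upper bound.

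\textbf{Expected main difficulty.} The delicate step will be the synchronization of three nested approximations---the mollification parameter $k$ (forced because $u$ only lies in $W^{1,p}$, not $W^{1,q}$), the dimension-reduction parameter $\varepsilon$, and the relaxation index $n$ furnished by \eqref{Qffastast}---while preserving the weak $W^{1,p}(\Omega;\Rb^3)$ convergence of the diagonal sequence and controlling the transverse perturbation $\varepsilon(x_1 (b_1^k)'+x_2(b_2^k)')$ appearing in $\nabla_3 v_{k,\varepsilon}$. Hypothesis \eqref{Qffastast} is essential: without it the transverse relaxation reaches only $Qf\geq f^{\ast\ast}$, and the identity $\inf_{\xi_\alpha}Qf(b,\xi_\alpha,\xi_3)=f_0^{\ast\ast}(b,\xi_3)$ would fail, preventing the upper and lower bounds from matching.
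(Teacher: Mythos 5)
Your proposal is correct and follows essentially the same strategy as the paper: the lower bound via the pointwise minorization $f\geq f_0\geq f_0^{\ast\ast}$, Ioffe's theorem and lower semicontinuity of the total variation, and the upper bound via the affine-in-$x_\alpha$ ansatz, mollification in $x_3$, relaxation at fixed $\chi$ using $Qf=f^{\ast\ast}$, the identity $(f^{\ast\ast})_0=f_0^{\ast\ast}$ with a measurable selection, and a diagonal extraction controlled by the coercivity in \eqref{Ogdengrowth}. The only point left implicit in your relaxation step is that relaxing $\int_\Omega f(\chi,\tfrac{1}{\varepsilon}\nabla_\alpha w,\nabla_3 w)\,dx$ at fixed $\varepsilon$ yields the quasiconvex envelope of the rescaled integrand $f_\varepsilon$, so you also need the commutation $Q(f_\varepsilon)=(Qf)_\varepsilon=(f^{\ast\ast})_\varepsilon$ (which the paper isolates in Lemma \ref{BFMbendingLemma2.3}) in order to land on $f^{\ast\ast}$ evaluated at the scaled gradient.
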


We point out that the functional ${\mathcal F}^{DR}$ in \eqref{FDR} is defined in full analogy with $\mathcal F$ in \eqref{relaxed1}, although it involves an asymptotic process which can be rigorously treated in the framework of $\Gamma$-convergence (we refer to \cite{DM} for more details on this subject). 
On the other hand, our proof of the integral representation \eqref{repFDR} is obtained following the same strategy, based on proving a double inequality, adopted at the end of the previous section, and it is self contained.

Notice that in the above result the limit total variation $|D\chi|$ is the counting measure. Before addressing its proof we start by proving a lemma following the ideas presented in 
\cite[Lemma 2.3]{BFMbending}.

\begin{lemma}\label{BFMbendingLemma2.3} Under the conditions of Proposition \ref{3D1DOgden} the following holds 
\begin{align}
			{\mathcal F}^{DR}(\chi,u)&:=\inf \left\{ \liminf_{\varepsilon \to 0^+}
				\left[\int_{\Omega}f^{\ast \ast}(\chi_\varepsilon(x), 
				\left(\tfrac{1}{\varepsilon}\nabla_\alpha u_\varepsilon(x), \nabla_3 u_\varepsilon(x))\right) \, dx 
				+ \left|\left(\tfrac{1}{\varepsilon}D_\alpha \chi_\varepsilon, D_3 \chi_\varepsilon\right)\right|(\Omega)\right]:  
			 \right.\\ \nonumber
			\\    \nonumber
			& \hspace{0,3cm}\left.	u_\varepsilon\in W^{1,q}(\Omega;\mathbb R^3), \chi_\varepsilon\in BV(\Omega;\{0,1\}), 
				u_\varepsilon \rightharpoonup u 
                \hbox{ in }W^{1,p}(\Omega;\mathbb R^3), 
				\chi_\varepsilon \weakstar \chi \hbox{ in } BV(\Omega;\{0,1\})\right\},\nonumber
\end{align}
	for every $(\chi, u)\in BV(\Omega;\{0,1\})\times W^{1,p}(\Omega;\mathbb R^3)$ for which
	${\mathcal F}^{DR}(\chi,u)$ is finite.
\end{lemma}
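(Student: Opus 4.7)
Denote by $\mathcal{G}^{DR}(\chi,u)$ the right-hand side infimum (computed with $f^{\ast\ast}$ in place of $f$). Since $f^{\ast\ast}\leq f$ pointwise, every competitor for $\mathcal F^{DR}$ is a competitor for the right-hand side with smaller or equal energy, hence $\mathcal G^{DR}(\chi,u)\leq \mathcal F^{DR}(\chi,u)$.

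For the converse, fix $\delta>0$ and select a sequence $(\chi_\varepsilon,u_\varepsilon)$ admissible for $\mathcal G^{DR}(\chi,u)$ whose energetic $\liminf$ lies within $\delta$ of $\mathcal G^{DR}(\chi,u)$. I plan to keep $\chi_\varepsilon$ unchanged (so the perimeter term requires no modification) and, for each fixed $\varepsilon>0$, produce a sequence $\{u_{\varepsilon,n}\}_n\subset W^{1,q}(\Omega;\mathbb R^3)$ with $u_{\varepsilon,n}\rightharpoonup u_\varepsilon$ in $W^{1,p}(\Omega;\mathbb R^3)$ as $n\to\infty$ and
\[
\lim_{n\to\infty}\int_\Omega f\bigl(\chi_\varepsilon,(\tfrac{1}{\varepsilon}\nabla_\alpha u_{\varepsilon,n},\nabla_3 u_{\varepsilon,n})\bigr)\,dx=\int_\Omega f^{\ast\ast}\bigl(\chi_\varepsilon,(\tfrac{1}{\varepsilon}\nabla_\alpha u_\varepsilon,\nabla_3 u_\varepsilon)\bigr)\,dx.
\]
To obtain such $u_{\varepsilon,n}$, I would undo the anisotropic scaling through the change of variables $y=(\varepsilon x_\alpha,x_3)$, which sends $\Omega$ onto $B(0,\varepsilon)\times(0,l)$. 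Writing $U_\varepsilon(y):=u_\varepsilon(y_\alpha/\varepsilon,y_3)$ and $\tilde\chi_\varepsilon(y):=\chi_\varepsilon(y_\alpha/\varepsilon,y_3)$, a chain-rule computation converts the scaled-gradient integral into the ordinary integral $\varepsilon^{-2}\int_{B(0,\varepsilon)\times(0,l)}f(\tilde\chi_\varepsilon,\nabla U_\varepsilon)\,dy$, and analogously for $f^{\ast\ast}$. On this fixed (though $\varepsilon$-dependent) domain, the hypothesis $Qf(b,\cdot)=f^{\ast\ast}(b,\cdot)$, together with the $p$-$q$ growth in \eqref{Ogdengrowth}, permits invoking the classical convex-envelope relaxation (see \cite{D}) to obtain $V_n\in W^{1,q}$ with $V_n\rightharpoonup U_\varepsilon$ weakly in $W^{1,p}$ and $\int f(\tilde\chi_\varepsilon,\nabla V_n)\,dy\to\int f^{\ast\ast}(\tilde\chi_\varepsilon,\nabla U_\varepsilon)\,dy$. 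Pulling back via $u_{\varepsilon,n}(x):=V_n(\varepsilon x_\alpha,x_3)$ provides the required sequence on $\Omega$.

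A standard diagonal extraction then produces $n(\varepsilon)$ such that $u_{\varepsilon,n(\varepsilon)}\rightharpoonup u$ in $W^{1,p}(\Omega;\mathbb R^3)$ (the weak compactness being supplied by the coercivity in \eqref{Ogdengrowth}) and
\[
\liminf_{\varepsilon\to 0^+}\Bigl[\int_\Omega f\bigl(\chi_\varepsilon,(\tfrac{1}{\varepsilon}\nabla_\alpha u_{\varepsilon,n(\varepsilon)},\nabla_3 u_{\varepsilon,n(\varepsilon)})\bigr)\,dx+\bigl|(\tfrac{1}{\varepsilon}D_\alpha\chi_\varepsilon,D_3\chi_\varepsilon)\bigr|(\Omega)\Bigr]\leq\mathcal G^{DR}(\chi,u)+\delta,
\]
so $\mathcal F^{DR}(\chi,u)\leq\mathcal G^{DR}(\chi,u)+\delta$. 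Letting $\delta\to 0^+$ delivers the equality.

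The principal obstacle is the scaled-gradient relaxation step for fixed $\varepsilon$: producing a recovery sequence with weak $W^{1,p}$ convergence and $W^{1,q}$ integrability whose $f$-energy matches the $f^{\ast\ast}$-energy. The change of variables transplants this into a classical setting, and the hypothesis $Qf(b,\cdot)=f^{\ast\ast}(b,\cdot)$ is crucial because it renders the target integrand convex, so the recovery construction (via simple laminates or convolution-type arguments) is valid for arbitrary $q\geq p$ and bypasses the critical-exponent restriction $q<Np/(N-1)$ that would otherwise be needed for a quasiconvex relaxation.
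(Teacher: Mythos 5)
Your argument is correct and follows essentially the same path as the paper: relax at fixed $\varepsilon$ to replace $f$ by $f^{\ast\ast}$ inside the scaled integral, then diagonalize using metrizability of bounded sets in the weak $W^{1,p}$ topology and the coercivity in \eqref{Ogdengrowth}. The paper realizes the fixed-$\varepsilon$ step through the algebraic chain $(f_\varepsilon)^{\ast\ast} = (f^{\ast\ast})_\varepsilon = (Qf)_\varepsilon = Q(f_\varepsilon)$ and then applies \cite[Theorem 8.4.1]{FL2} directly on $\Omega$, whereas you derive the same conclusion by the explicit change of variables $y=(\varepsilon x_\alpha, x_3)$ to the thin cylinder; these are two presentations of the same commutation fact, and yours is arguably a bit more self-contained since it avoids citing the scaling identity from \cite{BFMbending}. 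Two small imprecisions in your closing commentary, neither of which affects the proof: the hypothesis $Qf(b,\cdot)=f^{\ast\ast}(b,\cdot)$ does not render $f$ itself convex, it identifies the quasiconvex envelope appearing in the $W^{1,q}$ relaxation with the convex envelope, which is precisely what lets one read $Q(f_\varepsilon)$ as $(f^{\ast\ast})_\varepsilon$; and the remark about bypassing the restriction $q<Np/(N-1)$ is beside the point here, since the fixed-$\varepsilon$ relaxation (whether on $B(0,\varepsilon)\times(0,l)$ after your change of variables, or directly on $\Omega$ as in the paper) is carried out entirely within $W^{1,q}$ with $W^{1,q}$-weak recovery, so no $p$--$q$ gap arises in this step.
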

\begin{proof}
Using the arguments presented in \cite[(2.2)]{BFMbending} we obtain that
	\begin{equation}\label{2.2BFM}
		(Qf)_\varepsilon(b,\xi)= Q(f_\varepsilon)(b,\xi),
	\end{equation} 
	where for any function $g:\{0,1\}\times \mathbb R^{3\times 3}\to [0,+\infty)$, 
	$$ g_\varepsilon(b,\xi_1,\xi_2,\xi_3):=g\left(b,\tfrac{1}{\varepsilon}\xi_1,\tfrac{1}{\varepsilon}\xi_2, \xi_3\right).$$
	
	\noindent Recall that for any function $g:\{0,1\}\times \mathbb R^{3\times 3}\to \mathbb R$ the convex envelope with respect to the second variable
	\begin{equation}\nonumber
	g^{\ast \ast}(b,\xi)=\sup_{\varphi \in L^1(U;\mathbb R^{3\times 3})}\left\{\int_{U}g(b,\xi +\varphi(x)) \, dx: \int_{U}\varphi(x) \, dx=0\right\},
	\end{equation}
	where $U$ is a domain and ${\mathcal L}^3(U)=1$, coincides with the Legendre-Fenchel conjugate of $g$.
	
	By the definition of Legendre-Fenchel conjugate with respect to the second variable, it follows that
\begin{equation}\label{2.2BFMconv}
(g_\varepsilon)^\ast(b,\xi^\ast)= (g^{\ast})_{\tfrac{1}{\varepsilon}}(b,\xi^\ast),
\end{equation}
for every $b \in \{0,1\}$ and $\xi^\ast \in \mathbb R^{3\times 3}$.
	Thus, using \eqref{2.2BFMconv}, \eqref{Qffastast} and \eqref{2.2BFM} we have the following chain of equalities
	$$
	(f_\varepsilon)^{\ast\ast}(b,\xi)= (f^{\ast \ast})_\varepsilon(b,\xi) = (Qf)_\varepsilon(b,\xi)= Q(f_\varepsilon)(b,\xi),
	$$	
	for every $b \in \{0,1\}$ and $\xi \in \mathbb R^{3\times 3}$.
	
	Let ${\mathcal F}^{DR}_{f^{\ast\ast}}(\chi,u)$ be defined as ${\mathcal F}^{DR}(\chi,u)$ but replacing $f$ by $f^{\ast \ast}$.
	Clearly, since $f^{\ast\ast}\leq f$, it follows that ${\mathcal F}^{DR}_{f^{\ast\ast}}\leq {\mathcal F}^{DR}$
	so we only need to prove the opposite inequality. 
	To this end, for every $\delta>0$ and every $(\chi, u)\in BV(\Omega;\{0,1\})\times W^{1,p}(\Omega;\mathbb R^3)$ 
	for which ${\mathcal F}^{DR}(\chi,u)<+\infty$, let
	$(\chi_\varepsilon, u_\varepsilon)\in BV(\Omega;\{0,1\})\times W^{1,q}(\Omega;\mathbb R^3)$ be such that
	$u_\varepsilon \weak u$ in $W^{1,p}(\Omega;\mathbb R^3)$, $\chi_\varepsilon \weakstar \chi$ in $BV(\Omega;\{0,1\})$
	and
	$$
	{\mathcal F}^{DR}_{f^{\ast\ast}}(\chi,u)\geq \int_{\Omega}f^{\ast \ast}
	\left(\chi_\varepsilon(x),( \tfrac{1}{\varepsilon}\nabla_\alpha u_\varepsilon(x), \nabla_3 u_\varepsilon(x)) \right) \, dx 
	+\left|\left(\tfrac{1}{\varepsilon}D_\alpha \chi_\varepsilon, D_3 \chi_\varepsilon\right)\right|(\Omega)-\delta.
	$$
	By \cite[Theorem 8.4.1]{FL2}, there exists
	$ u_{\varepsilon, k}\in W^{1,q}(\Omega;\mathbb R^3)$ 
	such that 
	$
	 u_{\varepsilon, k} \rightharpoonup u_{\varepsilon} $ weakly in $W^{1,q}$, as $k \to + \infty$,
	and
	\begin{align}
		&\int_{\Omega}f^{\ast \ast}\left(\chi_\varepsilon(x),( \tfrac{1}{\varepsilon}\nabla_\alpha u_\varepsilon(x), 
		\nabla_3 u_\varepsilon(x)) \right) \, dx +\left|\left(\tfrac{1}{\varepsilon}D_\alpha \chi_\varepsilon, 
		D_3 \chi_\varepsilon\right)\right|(\Omega)\\
		&= \lim_{k\to +\infty}\int_{\Omega}f\left(\chi_\varepsilon(x), (\tfrac{1}{\varepsilon}\nabla_\alpha u_{\varepsilon,k}(x), 
		\nabla_3 u_{\varepsilon,k}(x)) \right) \, dx +\left|\left(\tfrac{1}{\varepsilon}D_\alpha \chi_\varepsilon, 
		D_3 \chi_\varepsilon\right)\right|(\Omega).\nonumber
	\end{align}
	Thus we can say that
	\begin{equation}\label{doubleenergy}
		{\mathcal F}^{DR}_{f^{\ast \ast}}(\chi, u)\geq \lim_{\varepsilon \to 0^+}\lim_{k\to +\infty} 
		\int_{\Omega}f\left(\chi_\varepsilon(x), (\tfrac{1}{\varepsilon}\nabla_\alpha u_{\varepsilon,k}(x), 
		\nabla_3 u_{\varepsilon,k}(x)) \right) \, dx +\left|\left(\tfrac{1}{\varepsilon}D_\alpha \chi_\varepsilon, 
		D_3 \chi_\varepsilon\right)\right|(\Omega) -\delta, 
	\end{equation}		
	and
	$$\lim_{\varepsilon \to 0^+}\lim_{k\to +\infty}\|u_{\varepsilon, k}-u\|_{L^p}=0.$$
	The growth from below in \eqref{Ogdengrowth}, the convexity of $|\cdot|^p$ and the fact that
	the weak topology is metrizable on bounded sets, ensure that there exist a diagonal sequence 
	$u_{\varepsilon_k,k}$  and a subsequence $\chi_{\varepsilon_k}$ such that
	$$
	(\chi_{\varepsilon_k},u_{\varepsilon_k,k}) \to (\chi,u) \hbox{ in } BV \hbox{-weak} \ast \times W^{1,p} \hbox{-weak}, \hbox{ as }
    k \to + \infty,
	$$ 
	the double limit in \eqref{doubleenergy} exists, and thus
	$$
	{\mathcal F}^{DR}_{f^{\ast\ast}}(\chi, u) \geq \lim_{k\to +\infty}\int_{\Omega}f\left(\chi_{\varepsilon_k}(x), 
	(\tfrac{1}{\varepsilon_k}\nabla_\alpha u_{\varepsilon,k}(x), \nabla_3 u_{\varepsilon_k,k}(x)) \right) \, dx +
	\left|\left(\tfrac{1}{\varepsilon_k}D_\alpha \chi_{\varepsilon_k}, D_3 \chi_{\varepsilon_k}\right)\right|(\Omega) -\delta,
	$$
	which, in turn, implies that
	$$
	{\mathcal F}^{DR}_{f^{\ast\ast}}(\chi, u) \geq {\mathcal F}^{DR}(\chi, u) -\delta.
	$$
	It suffices to let $\delta \to 0^+$ to conclude the proof.
\end{proof}

\begin{proof}
	[Proof of Proposition \ref{3D1DOgden}]
	
	The proof of \eqref{repFDR} is obtained by showing a double inequality.
	For what concerns the lower bound, it suffices to observe that $f^{\ast \ast}_0 \leq f$,
	\begin{equation}
		\nonumber
		c|\xi_3|^p- c_0 \leq f^{\ast \ast}_0(b,\xi_3)\leq C(|\xi_3|^q+1),
	\end{equation}
	for every $(b,\xi_3) \in \{0,1\}\times \mathbb R^3$ (see \cite{ABP}), and that the functional 
$$\int_\Omega f^{\ast \ast}_0(\chi(x_\alpha, x_3),\nabla_3 u (x_\alpha, x_3)) \, dx$$ 
is lower semicontinuous with respect to $BV$-weak $\ast \times W^{1,p}$ -weak convergence by Theorem \ref{thm2.4ABFvariant}. Thus, the superadditivity of the limit inf, the fact that $\left|\left(\tfrac{1}{\varepsilon}D_\alpha \chi_{\varepsilon}, D_3 \chi_\varepsilon\right)\right|(\Omega) \geq |D_3 \chi_\varepsilon|(\Omega)$ and the lower semicontinuity of the total variation, entail that
		\begin{equation}\nonumber
		\begin{split}
			{\mathcal F}^{DR}(\chi, u)&\geq \liminf_{\varepsilon \to 0^+} \int_{\Omega}f^{\ast \ast}_0(\chi_\varepsilon(x_\alpha, x_3),\nabla_3 u_\varepsilon (x_\alpha, x_3))\,dx +|D_3 \chi_\varepsilon|(\Omega)\\
			&\geq \pi \left(\int_0^l f^{\ast \ast}_0(\chi(x_3),u'(x_3))\,dx_3 + |D \chi|(0,l)\right).
		\end{split}
	\end{equation}

	In order to prove the upper bound, we use Lemma \ref{BFMbendingLemma2.3} and replace $f$ by $f^{\ast \ast}$.   We follow the proof of \cite[Proposition 3.3]{ABP}, first assuming that $u \in C^1((0,l);\mathbb R^3)$.
	
	Let $\chi \in BV((0,l);\{0,1\})$ and consider $(\varphi, \psi)\in C^1([0,l];\mathbb R^3)\times C^1([0,l]);\mathbb R^3)$.
	Define $\chi_\varepsilon(x):=\chi(x_3) $ and $w_\varepsilon(x):= u(x_3)+ \varepsilon(x_1\varphi(x_3)+x_2 \psi(x_3))$.
	Clearly $w_\varepsilon \to u$ in $W^{1,q}(\Omega;\mathbb R^3)$, and $\left(\tfrac{1}{\varepsilon}\nabla_\alpha w_\varepsilon, \nabla_3 w_\varepsilon\right)\to (\varphi(x_3),\psi(x_3),u'(x_3))$ strongly in $L^q$ (even uniformly). 
	Thus the bound from above in \eqref{Ogdengrowth} allows us to invoke the Dominated Convergence Theorem to obtain  	
	$$
	\displaystyle{\limsup_{\varepsilon \to 0^+}
		\int_{\Omega}f^{\ast \ast}(\chi(x_3),(\tfrac{1}{\varepsilon}\nabla_\alpha w_\varepsilon(x), \nabla_3 w_\varepsilon(x)))\,dx= \pi \int_0^l f^{\ast \ast}(\chi(x_3), \varphi(x_3),\psi(x_3),u'(x_3))\,dx_3.}
	$$
	The arbritrariness of $\varphi$ and $\psi$ yield 
	$$
	{\mathcal F}^{DR}(\chi, u)\leq \pi |D \chi|(0,l)+\inf_{(\varphi, \psi)\in (C^1([0,l];\mathbb R^3))^2}\pi \int_0^lf^{\ast \ast}(\chi(x_3),\varphi(x_3), \psi(x_3),u'(x_3))\,dx_3.
	$$
	The above inequality also holds for $u \in W^{1,p}((0,l);\mathbb R^3)$, by taking the infimum over pairs $(\varphi, \psi)\in (L^p((0,l);\mathbb R^3))^2$, using standard mollification results and the same arguments as in the proof of 
	Proposition \ref{lb1D}. Thus we conclude that,
for every $\chi \in BV((0,l);\{0,1\})$ and $u\in W^{1,p}((0,l);\mathbb R^3)$,
	$$
	{\mathcal F}^{DR}(\chi, u)\leq \pi |D \chi|(0,l)+\inf_{(\varphi, \psi)\in (L^p((0,l);\mathbb R^3))^2}\pi \int_0^lf^{\ast \ast}(\chi(x_3),\varphi(x_3), \psi(x_3),u'(x_3))\,dx_3.
	$$
	Observe that the continuity and the coercivity of $f^{\ast \ast}(b,\cdot)$, as in \eqref{Ogdengrowth}, entail
	\begin{equation}
		\label{foastast=fastast0}
		(f^{\ast \ast})_0(b,\xi_3) = f^{\ast \ast}_0(b,\xi_3)\hbox{ for all }b \in \{0,1\} \hbox{ and }\xi_3 \in \mathbb R^3,
	\end{equation}
	so, using the coercivity in \eqref{Ogdengrowth}, \eqref{foastast=fastast0} and the measurability criterion which provides the existence of $(\bar\varphi, \bar \psi) \in L^p((0,l);\mathbb R^3)$ such that
	$$
	(f_0)^{\ast\ast}(\chi(x_3),u'(x_3))=(f^{\ast \ast})_0(\chi(x_3),u'(x_3))=f^{\ast \ast}(\chi(x_3),u'(x_3),\bar{\varphi}(x_3),\bar{\psi}(x_3)),
	$$
	for every $(\chi, u)\in BV((0,l);\{0,1\})\times W^{1,p}((0,l);\mathbb R^3)$, 
	it follows that 
	$$
	{\mathcal F}^{DR}(\chi, u)\leq \pi \left( |D \chi|(0,l)+ \int_0^{l}f_0^{\ast \ast}(\chi(x_3),u'(x_3)) \,dx_3\right),
	$$
	which completes the proof. 
\end{proof}
Arguments similar to those used in Remark \ref{threerem} ensure that the volume constraint $\displaystyle \frac{1}{|\Omega|}\int_{\Omega}\chi(x) \, dx = \theta$ 
		can also be treated in the dimension reduction problem studied in Proposition \ref{3D1DOgden}.

\medskip

\noindent\textbf{Acknowledgements}.
The authors would like to thank CMAF-CIO at the Universidade de 
Lisboa, CIMA at the Universidade de \'Evora and Dipartimento di Ingegneria Industriale at the Universit\`a degli Studi di Salerno, where this 
research was carried out. We also gratefully acknowledge the support
of INDAM GNAMPA, Programma Professori Visitatori 2017. The research of ACB and EZ was partially supported by the
Funda\c c\~ao para a Ci\^encia e a Tecnologia through project
UID/MAT/04561/2013 and project UID/MAT/04674/2013, respectively.

\end{document}